\numberwithin{equation}{section}
     \newtheorem{thm}{Theorem}[section]
     \newtheorem{cor}[thm]{Corollary}
     \newtheorem{prop}[thm]{Proposition}
     \newtheorem{lem}[thm]{Lemma}
\theoremstyle{definition}
      \newtheorem{defn}{Definition}[section]
     \newtheorem{exmp}{Example}[section]
\theoremstyle{remark}
     \newtheorem{rem}{Remark}[section]
\newcommand{\C}{\mathbb{C}}
\newcommand{\cF}{\mathcal{F}}
\newcommand{\cL}{\mathcal{L}}
\newcommand{\cX}{\mathcal{X}}
\newcommand{\BMO}{\mathrm{BMO}}
\newcommand{\Lip}{\mathrm{Lip}}
\newcommand{\ls}{\lesssim}
\newcommand{\gs}{\gtrsim}
\newcommand{\sqc}[1]{(#1_n)_{n\ge0}}    
\newcommand{\Lz}[1]{L_{#1,0}}
\newcommand{\cLN}{\mathcal{L}^{\natural}}
\newcommand{\pwm}{\mathrm{PWM}}
\newcommand{\BMON}{\mathrm{BMO}^{\natural}}
\newcommand{\LipN}{\mathrm{Lip}^{\natural}}
\newcommand{\msckw}{%
\footnotetext{\hspace{-0.35cm} 2010 {\it Mathematics Subject Classification}. 
Primary 60G46; Secondary 46E30, 42B35.
\endgraf{\it Key words and phrases.} 
martingale, pointwise multiplier, Campanato space, bounded mean oscillation.
}
}
\begin{document}

\baselineskip=18pt

\title{Pointwise multipliers on martingale Campanato spaces \msckw}
\author{Eiichi Nakai and Gaku Sadasue}
\date{}

\maketitle

\begin{abstract}
We introduce generalized Campanato spaces $\mathcal{L}_{p,\phi}$ 
on a probability space $(\Omega,\mathcal{F},P)$,
where $p\in[1,\infty)$ and $\phi:(0,1]\to(0,\infty)$.
If $p=1$ and $\phi\equiv1$, then $\mathcal{L}_{p,\phi}=\mathrm{BMO}$.
We give a characterization of the set of all pointwise multipliers 
on $\mathcal{L}_{p,\phi}$.
\end{abstract}

\section{Introduction}\label{sec:intro}

We consider a probability space $(\Omega,\cF,P)$ such that $\cF=\sigma(\bigcup_{n}\cF_n)$,
where $\{\cF_n\}_{n\ge0}$ is a nondecreasing sequence of sub-$\sigma$-algebras of $\cF$. 
For the sake of simplicity, let $\cF_{-1}=\cF_0$.
We suppose that every $\sigma$-algebra $\cF_n$ is generated by countable atoms,
where $B\in\cF_n$ is called an atom (more precisely a $(\cF_n,P)$-atom), 
if any $A\subset B$ with $A\in\cF_n$ satisfies $P(A)=P(B)$ or $P(A)=0$.
Denote by $A(\cF_n)$ the set of all atoms in $\cF_n$.
The expectation operator and the conditional expectation operators relative to $\cF_n$
are denoted by $E$ and $E_n$, respectively.

Let $\cX$ be a normed space of $\cF$-measurable functions.
We say that 
an $\cF$-measurable function $g$ is a pointwise multiplier on $\cX$,
if the pointwise multiplication
$fg$ is in $\cX$ for any $f\in\cX$.
We denote by $\pwm(\cX)$ the set of all pointwise multipliers on $\cX$.
If $\cX$ is a Banach space and has the following property,
then every $g\in\pwm(\cX)$ is a bounded operator on $\cX$. 
\begin{equation}\label{sub}
 \text{$f_n\to f$ in $\cX$ $(n\to\infty)$} \ \Longrightarrow\ 
 \text{$\exists\{n(j)\}$ s.t. $f_{n(j)}\to f$ a.s. $(j\to\infty)$.}
\end{equation}
Actually, from \eqref{sub} we see that $g$ is a closed operator.
Therefore, $g$ is a bounded operator by the closed graph theorem.

It is known that $\pwm(L_p)=L_{\infty}$ for $p\in(0,\infty]$.
More generally, if $\cX$ is a (quasi) Banach function space,
then $\pwm(\cX)=L_{\infty}$ (see \cite{Maligranda-Persson1989, Nakai1995MACT}).
For Banach function spaces, see Kikuchi~\cite{Kikuchi2010}.

In this paper we consider the pointwise multipliers on generalized Campanato spaces
which are not Banach function spaces in general.
We always assume that $\cF_0=\{\emptyset,\Omega\}$, 
that is, the operator $E_0$ coincides with $E$. 
Then we introduce generalized Campanato spaces 
$\cL_{p,\phi}$ and $\cLN_{p,\phi}$ 
as the following:
\begin{defn}\label{defn:Cam}
Let $p\in[1,\infty)$ and $\phi$ be a function from $(0,1]$ to $(0,\infty)$.
For $f\in L_1$, let
\begin{equation}\label{Cam-norm}
  \|f\|_{\cL_{p,\phi}}
  =
  \sup_{n\ge0}\sup_{B\in A(\cF_n)}
  \frac{1}{\phi(P(B))}\left(\frac{1}{P(B)}\int_B|f-E_nf|^p\,dP\right)^{1/p},
\end{equation}
and
\begin{equation}\label{CamN-norm}
  \|f\|_{\cLN_{p,\phi}}
  =
  \|f\|_{\cL_{p,\phi}}
  +
  |Ef|.
\end{equation}
Define
\begin{equation*}
  \cL_{p,\phi} = \{f\in L_1: \|f\|_{\cL_{p,\phi}}<\infty \}
 \quad\text{and}\quad
  \cLN_{p,\phi} = \{f\in L_1: \|f\|_{\cLN_{p,\phi}}<\infty \}.
\end{equation*}
\end{defn}

If $\phi(r)=r^{\lambda}$, $\lambda\in(-\infty,\infty)$, 
we simply denote $\cL_{p,\phi}$ and $\cLN_{p,\phi}$ 
by $\cL_{p,\lambda}$ and $\cLN_{p,\lambda}$, respectively,
which were introduced by \cite{Nakai-Sadasue2012JFSA}.

Note that $\cL_{p,\phi}$ and $\cLN_{p,\phi}$ coincide
as sets of measurable functions.
We regard $\cL_{p,\phi}=(\cL_{p,\phi},\|\cdot\|_{\cL_{p,\phi}})$ is a seminormed space
and $\cLN_{p,\phi}=(\cLN_{p,\phi},\|\cdot\|_{\cLN_{p,\phi}})$ is a normed space.
Then $\cLN_{p,\phi}$ is a Banach space, 
but it is not a Banach function space in general.
It is easy to see that $\cLN_{p,\phi}$ has the property \eqref{sub}, 
since 
\begin{equation*}
 \|f\|_{L_1}\le E[|f-Ef|]+|Ef|\le\max(1,\phi(1))\|f\|_{\cLN_{p,\phi}}.
\end{equation*}
For $g\in\pwm(\cLN_{p,\phi})$,
let
$$
 \|g\|_{Op}
 =
 \sup_{f\not\equiv0}
 \frac{\|fg\|_{\cLN_{p,\phi}}}{\|f\|_{\cLN_{p,\phi}}}.
$$

We also define $\BMO$ and $\Lip_{\alpha}$ as the following:
\begin{defn}\label{defn:BMO Lip}
For $\phi\equiv1$, 
denote $\cL_{1,\phi}$ and $\cLN_{1,\phi}$ by $\BMO$ and $\BMON$,
respectively.
For $\phi(r)=r^{\alpha}$, $\alpha>0$, 
denote $\cL_{1,\phi}$ and $\cLN_{1,\phi}$ by $\Lip_{\alpha}$ and $\LipN_{\alpha}$,
respectively.
\end{defn}

Let 
$$
 \Lz{1}=\{f\in L_1: Ef=0\}.
$$
Then $\BMO\cap\Lz{1}=\BMON\cap\Lz{1}$ and 
$\Lip_{\alpha}\cap\Lz{1}=\LipN_{\alpha}\cap\Lz{1}$. 
These spaces coincide with $\BMO$ and $\Lip_{\alpha}$ 
defined by Weisz~\cite{Weisz1990,Weisz1994}, respectively,
under the assumption that 
every $\sigma$-algebra $\cF_n$ is generated by countable atoms,
see \cite{Nakai-Sadasue2012JFSA} for details.

We say $\{\cF_n\}_{n\ge0}$ is regular if there exists $R\geq 2$ such that
\begin{equation}\label{regular}
  f_n\leq Rf_{n-1} \text{ for all non-negative martingales }f=\sqc{f}.
\end{equation}
A function $\theta:(0,1]\to(0,\infty)$ is said to  
satisfy the doubling condition
if there exists a constant $C>0$ such that 
\begin{equation*} 
     \frac1C\le\frac{\theta(r)}{\theta(s)}\le C 
     \quad\text{for}\quad r,s\in(0,1], \ \frac{1}{2}\le\frac{r}{s}\le 2.
\end{equation*} 
A function $\theta:(0,1]\to(0,\infty)$ is said to be 
almost increasing (almost decreasing)
if there exists a constant $C>0$ such that 
\begin{equation*} 
     \theta(r)\le C\theta(s) \quad (\theta(r)\ge C\theta(s)) 
     \quad\text{for}\quad 0<r\le s\le1.
\end{equation*}

Our main result is the following:
\begin{thm}\label{thm:PWM}
Let $\{\cF_n\}_{n\ge0}$ be regular, $\cF_0=\{\emptyset,\Omega\}$,
$p\in[1,\infty)$ and $\phi:(0,1]\to(0,\infty)$.
Assume that $\phi$ satisfies the doubling condition
and that 
\begin{equation}\label{int phi}
 \int_0^r\phi(t)^p\,dt\le Cr\phi(r)^p
 \quad\text{for all $r\in(0,1]$}.
\end{equation}
Let
\begin{equation}\label{phi_*}
 \phi_*(r)=1+\int_r^1\frac{\phi(t)}t\,dt.
\end{equation}
Then
\begin{equation*}
  \pwm(\cLN_{p,\phi})=\cL_{p,\phi/\phi_*}\cap L_{\infty}.
\end{equation*}
Moreover, for $g\in\pwm(\cLN_{p,\phi})$, $\|g\|_{Op}$ is equivalent to 
$\|g\|_{\cL_{p,\phi/\phi_*}}+\|g\|_{L_{\infty}}$.
\end{thm}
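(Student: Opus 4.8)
The plan is to establish the two inclusions separately, together with the matching norm bounds, after isolating one preliminary estimate that drives everything. That preliminary step is a \emph{growth estimate} for conditional expectations: under regularity and the doubling condition, every $f\in\cL_{p,\phi}$ and every atom $B\in A(\cF_n)$ should satisfy
\[
 |E_nf-Ef|\,\mathbf 1_B\ls\phi_*(P(B))\,\|f\|_{\cL_{p,\phi}}.
\]
I would prove this by telescoping along the decreasing chain of atoms $\Omega=B_0\supset B_1\supset\cdots\supset B_n=B$ with $B_k\in A(\cF_k)$. The value of $E_kf-E_{k-1}f$ on $B_k$ is controlled by $(P(B_{k-1})/P(B_k))^{1/p}\phi(P(B_{k-1}))\|f\|_{\cL_{p,\phi}}$, and regularity forces each genuine refinement to shrink the atom by a fixed factor (each $\cF_k$-subatom of an $\cF_{k-1}$-atom carries at least a $1/R$ fraction of it, so $P(B_k)\le(1-1/R)P(B_{k-1})$ whenever $B_k\subsetneq B_{k-1}$). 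Hence only geometrically separated scales contribute, and the doubling condition lets me compare the resulting sum with $\int_{P(B)}^1\phi(t)/t\,dt\le\phi_*(P(B))$. Since $\phi_*\ge1$, this also gives $|E_nf|\,\mathbf 1_B\ls\phi_*(P(B))\|f\|_{\cLN_{p,\phi}}$.

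For the inclusion $\cL_{p,\phi/\phi_*}\cap L_\infty\subseteq\pwm(\cLN_{p,\phi})$ I would fix $f\in\cLN_{p,\phi}$, $g\in\cL_{p,\phi/\phi_*}\cap L_\infty$ and an atom $B\in A(\cF_n)$, and on $B$ center the product against the constant $(E_nf)(E_ng)|_B$, writing
\[
 fg-(E_nf)(E_ng)=(f-E_nf)(g-E_ng)+(E_nf)(g-E_ng)+(E_ng)(f-E_nf).
\]
The first and third terms are handled using $|g-E_ng|\le2\|g\|_{L_\infty}$ and $|E_ng|\le\|g\|_{L_\infty}$ respectively, each producing the admissible factor $\phi(P(B))$; the decisive middle term is bounded with the growth estimate $|E_nf|\ls\phi_*(P(B))\|f\|_{\cLN_{p,\phi}}$, times the defining bound $(\frac1{P(B)}\int_B|g-E_ng|^p)^{1/p}\le(\phi/\phi_*)(P(B))\|g\|_{\cL_{p,\phi/\phi_*}}$, the two $\phi_*$ factors cancelling to leave exactly $\phi(P(B))$. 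Passing from this centering constant to $E_n(fg)$ costs only a factor $2$, and the term $|E(fg)|\le\|g\|_{L_\infty}\|f\|_{L_1}$ is controlled by the embedding $\|f\|_{L_1}\ls\|f\|_{\cLN_{p,\phi}}$ noted in the introduction. This yields $\|fg\|_{\cLN_{p,\phi}}\ls(\|g\|_{\cL_{p,\phi/\phi_*}}+\|g\|_{L_\infty})\|f\|_{\cLN_{p,\phi}}$.

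For the reverse inclusion, membership $g\in L_\infty$ comes cheaply: testing the multiplier on $f\equiv1$ gives $g\in\cLN_{p,\phi}$, and iterating $\|g\cdot h\|_{\cLN_{p,\phi}}\le\|g\|_{Op}\|h\|_{\cLN_{p,\phi}}$ yields $\|g^{2m}\|_{\cLN_{p,\phi}}\le\|g\|_{Op}^{2m}$; since $E(g^{2m})=\|g\|_{L_{2m}}^{2m}\le\|g^{2m}\|_{\cLN_{p,\phi}}$, letting $m\to\infty$ gives $\|g\|_{L_\infty}\le\|g\|_{Op}$. For the sharp Campanato bound I would, for each atom $B\in A(\cF_n)$, choose a test function $f=f_B$ with $\|f_B\|_{\cLN_{p,\phi}}\ls1$ and $E_nf_B\gs\phi_*(P(B))$ on $B$. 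Splitting $f_Bg-E_n(f_Bg)$ on $B$ as $(E_nf_B)(g-E_ng)$ plus an error $(f_B-E_nf_B)g-E_n((f_B-E_nf_B)g)$, the error has $L_p(B)$-average $\ls\|g\|_{L_\infty}\phi(P(B))$ (now that $g\in L_\infty$ is available), while the left side is $\le\phi(P(B))\|f_Bg\|_{\cL_{p,\phi}}\ls\phi(P(B))\|g\|_{Op}$. Dividing by the lower bound $E_nf_B\gs\phi_*(P(B))$ produces $(\frac1{P(B)}\int_B|g-E_ng|^p)^{1/p}\ls(\phi/\phi_*)(P(B))\|g\|_{Op}$, i.e.\ $\|g\|_{\cL_{p,\phi/\phi_*}}\ls\|g\|_{Op}$.

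The main obstacle is the construction of the test functions $f_B$: I would build a ``generalized logarithm'' that is roughly constant $\approx\sum_{k}\phi(P(B_{k-1}))$ on the shell between consecutive atoms in the chain down to $B$, so that its conditional mean on $B$ reaches $\phi_*(P(B))$ while its jump across each shell stays of order $\phi$ at the corresponding scale. Verifying simultaneously that $\|f_B\|_{\cL_{p,\phi}}\ls1$, that $|Ef_B|\ls1$ (the large values sit on exponentially small sets), and that the mean on $B$ genuinely attains the order $\phi_*(P(B))$ is exactly where the doubling condition and hypothesis \eqref{int phi} are needed: \eqref{int phi}, in its discrete geometric form $\sum_{r_k\le r}\phi(r_k)^pr_k\ls r\phi(r)^p$, controls the oscillation of the staircase and prevents the tail of small scales from dominating. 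Assembling the two inclusions and the two norm comparisons then gives the stated equivalence $\|g\|_{Op}\approx\|g\|_{\cL_{p,\phi/\phi_*}}+\|g\|_{L_\infty}$.
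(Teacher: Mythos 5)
Your proposal is correct, and its skeleton coincides with the paper's own proof: your preliminary growth estimate is exactly Lemma~\ref{lem:fB} (proved there by the same telescoping along a chain of atoms, using Lemma~\ref{lem:NS3.3} and the doubling condition); your three-term decomposition in the sufficiency direction is precisely the computation behind Lemma~\ref{lem:NY3.3}, combined with Lemma~\ref{lem:fB} so that the two $\phi_*$ factors cancel; and your ``generalized logarithm'' test function is the paper's Lemma~\ref{lem:fBn}, with the same key ingredients (the support observation that $f-E_nf$ lives only on $B_n$, the regularity-induced separation $(1+1/R)P(B_{k_j})\le P(B_{k_{j-1}})$ which makes the jumps sum to $\phi_*$, and condition \eqref{int phi} to tame the tail). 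The one place you genuinely diverge is the proof that a multiplier must be bounded. The paper's Lemma~\ref{lem:NY3.4} tests $g$ against indicators $\chi_B$ of atoms, which requires the norm bound $\|\chi_B\|_{\cLN_{p,\phi}}\ls 1/\phi(P(B))$ (Lemma~\ref{lem:chiB}) and a geometric argument involving a parent atom, hence regularity. You instead iterate the multiplication: $\|g^{2m}\|_{\cLN_{p,\phi}}\le\|g\|_{Op}^{2m}\|1\|_{\cLN_{p,\phi}}=\|g\|_{Op}^{2m}$, and since the $\cLN_{p,\phi}$-norm dominates $|E[g^{2m}]|=\|g\|_{L_{2m}}^{2m}$, letting $m\to\infty$ gives $\|g\|_{L_\infty}\le\|g\|_{Op}$ on a probability space. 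This is more elementary: it avoids Lemma~\ref{lem:chiB} and the use of regularity in that step, and it even yields constant $1$ where the paper's constant depends on $R$. (Regularity is of course still needed elsewhere, in Lemmas~\ref{lem:fB} and \ref{lem:fBn}.)

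Two minor caveats. First, the power trick as written assumes $g$ is real-valued; for complex $g$ use $|g|^{2m}=g^m\overline{g}^m$ together with the fact that conjugation preserves the $\cLN_{p,\phi}$-norm, so $\||g|^{2m}\|_{\cLN_{p,\phi}}\le\|g\|_{Op}^{2m}$ still holds. Second, for the $L_p$ estimate of the tail of your test function, the discrete inequality actually needed is $\sum_k\phi(r_k)r_k^{1/p}\ls\phi(r)r^{1/p}$, i.e.\ the integral form \eqref{int phi 1/p}, rather than the literal discretization $\sum_k\phi(r_k)^p r_k\ls r\phi(r)^p$ of \eqref{int phi} that you quote; the paper points out (Remark~\ref{rem:phi} and \cite[Lemma~7.1]{Nakai2008AMSin}) that doubling together with \eqref{int phi} implies this stronger-looking form, so your hypotheses do suffice, but the step deserves that justification.
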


See \cite{Janson1976,Nakai1993Studia,Nakai-Yabuta1985JMSJ,Stegenga1976,Yabuta1993} for 
pointwise multipliers on $\BMO$ and Campanato spaces defined on the Euclidean space.
Our basic idea comes from \cite{Janson1976,Nakai-Yabuta1985JMSJ}.

\begin{rem}\label{rem:phi}
\begin{enumerate}[\upshape (i)]
\item 
If $\phi$ satisfies the doubling condition and \eqref{int phi},
then $r\phi(r)^p$ is almost increasing.
\item
If $\phi$ is almost increasing, then $\phi/\phi_*$ is also.
\item
Let
\begin{equation}\label{norm F}
 \|f\|_{\cL_{p,\phi,\cF}}
 =
 \sup_{n\ge0}\sup_{A\in \cF_n}
  \frac{1}{\phi(P(A))}\left(\frac{1}{P(A)}\int_A|f-E_nf|^p\,dP\right)^{1/p}.
\end{equation}
Then $\|f\|_{\cL_{p,\phi}}\le\|f\|_{\cL_{p,\phi,\cF}}$ by the definition.
If $\phi$ is almost increasing, 
then $\|f\|_{\cL_{p,\phi}}$ and $\|f\|_{\cL_{p,\phi,\cF}}$ are equivalent.
Actually, for any $A\in \cF_n$, 
there exists a sequence of atoms $B_{\ell}\in A(\cF_n)$, $\ell=1,2,\cdots$, 
such that $A=\cup_{\ell}B_{\ell}$ and $P(A)=\sum_{\ell}P(B_{\ell})$.
Then
\begin{align*}
 \int_A|f-E_nf|^p\,dP
 &=
 \sum_{\ell}\int_{B_{\ell}}|f-E_nf|^p\,dP \\
 &\le
 \sum_{\ell}\phi(P(B_{\ell}))^pP(B_{\ell})\|f\|_{\cL_{p,\phi}}^p \\
 &\le C^p
 \phi(P(A))^pP(A)\|f\|_{\cL_{p,\phi}}^p.
\end{align*}
This shows $\|f\|_{\cL_{p,\phi,\cF}}\le C\|f\|_{\cL_{p,\phi}}$.
If $\phi$ is not almost increasing, then 
$\|f\|_{\cL_{p,\phi}}$ is not equivalent to $\|f\|_{\cL_{p,\phi,\cF}}$ in general,
see \cite{Nakai-Sadasue2012JFSA}.
The norm \eqref{norm F} was introduced by \cite{MiNaSa2012MathNachr}
for general $\{\cF_n\}_{n\ge0}$.
\end{enumerate}
\end{rem}

By Theorem~\ref{thm:PWM} we have the next two corollaries immediately:

\begin{cor}\label{cor:PWM BMO}
Let $\{\cF_n\}_{n\ge0}$ be regular and $\cF_0=\{\emptyset,\Omega\}$.
Then
\begin{equation*}
  \pwm(\BMON)=\cL_{1,\psi}\cap L_{\infty},
\end{equation*}
where 
$\psi(r)=1/\log(e/r)$.
Moreover, for $g\in\pwm(\BMON)$, $\|g\|_{Op}$ is equivalent to 
$\|g\|_{\cL_{1,\psi}}+\|g\|_{L_{\infty}}$.
\end{cor}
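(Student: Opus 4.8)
The plan is to specialize Theorem~\ref{thm:PWM} to the case $p=1$ and $\phi\equiv1$, since by Definition~\ref{defn:BMO Lip} we have $\BMON=\cLN_{1,\phi}$ precisely for this choice. The work therefore reduces to verifying the hypotheses of the theorem for the constant weight and evaluating the auxiliary function $\phi_*$ explicitly.

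First I would check the two structural assumptions on $\phi$. The doubling condition holds trivially for $\phi\equiv1$, since the ratio $\phi(r)/\phi(s)$ is identically $1$. The integral condition \eqref{int phi} with $p=1$ reads $\int_0^r\phi(t)\,dt=r\le Cr\phi(r)=Cr$, which holds with $C=1$. The assumptions that $\{\cF_n\}_{n\ge0}$ is regular and $\cF_0=\{\emptyset,\Omega\}$ are inherited directly from the hypotheses of the corollary, so Theorem~\ref{thm:PWM} is applicable.

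Next I would compute $\phi_*$ from \eqref{phi_*}. With $\phi\equiv1$,
\begin{equation*}
 \phi_*(r)=1+\int_r^1\frac{dt}{t}=1-\log r=\log(e/r),
\end{equation*}
so that $(\phi/\phi_*)(r)=1/\log(e/r)=\psi(r)$ on the nose. Substituting into the conclusion of Theorem~\ref{thm:PWM} then yields $\pwm(\BMON)=\cL_{1,\phi/\phi_*}\cap L_{\infty}=\cL_{1,\psi}\cap L_{\infty}$, together with the stated equivalence $\|g\|_{Op}\simeq\|g\|_{\cL_{1,\psi}}+\|g\|_{L_{\infty}}$.

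Since the entire argument is a direct substitution into the main theorem, there is no genuine obstacle here; the only point requiring any care is the elementary evaluation of the logarithmic integral and the observation that $\phi/\phi_*$ equals $\psi$ exactly rather than merely up to equivalence. One may also note, via Remark~\ref{rem:phi}(ii), that $\psi$ is almost increasing, which is consistent with its role as the defining weight of the target Campanato space.
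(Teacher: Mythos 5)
Your proposal is correct and matches the paper's own route exactly: the paper derives Corollary~\ref{cor:PWM BMO} as an immediate specialization of Theorem~\ref{thm:PWM} with $p=1$ and $\phi\equiv1$, which is precisely what you carry out. Your verification of the doubling condition and \eqref{int phi}, and the computation $\phi_*(r)=1+\int_r^1 t^{-1}\,dt=\log(e/r)$ giving $\phi/\phi_*=\psi$ exactly, are the only steps needed and they are all correct.
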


\begin{cor}\label{cor:PWM Lip}
Let $\{\cF_n\}_{n\ge0}$ be regular, $\cF_0=\{\emptyset,\Omega\}$ and $\alpha>0$.
Then
\begin{equation*}
  \pwm(\LipN_{\alpha})=\Lip_{\alpha}\cap L_{\infty}.
\end{equation*}
Moreover, for $g\in\pwm(\LipN_{\alpha})$, $\|g\|_{Op}$ is equivalent to 
$\|g\|_{\Lip_{\alpha}}+\|g\|_{L_{\infty}}$.
\end{cor}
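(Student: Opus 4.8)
The plan is to obtain Corollary~\ref{cor:PWM Lip} as a direct specialization of Theorem~\ref{thm:PWM} to the parameters $p=1$ and $\phi(r)=r^{\alpha}$, for which $\cLN_{1,\phi}=\LipN_{\alpha}$ by Definition~\ref{defn:BMO Lip}. First I would check that this $\phi$ meets the two standing hypotheses of the theorem. The doubling condition is immediate, since $\phi(r)/\phi(s)=(r/s)^{\alpha}$ lies between $2^{-\alpha}$ and $2^{\alpha}$ whenever $1/2\le r/s\le2$. The integral condition \eqref{int phi} follows from a one-line computation: with $p=1$,
\[
 \int_0^r\phi(t)\,dt=\int_0^r t^{\alpha}\,dt=\frac{r^{\alpha+1}}{\alpha+1}=\frac{r\,\phi(r)}{\alpha+1},
\]
so \eqref{int phi} holds with $C=1/(\alpha+1)$.

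Next I would evaluate $\phi_*$ from \eqref{phi_*}. Since $\alpha>0$,
\[
 \phi_*(r)=1+\int_r^1 t^{\alpha-1}\,dt=1+\frac{1-r^{\alpha}}{\alpha},
\]
and the decisive observation is that this is comparable to a constant on $(0,1]$: indeed $1\le\phi_*(r)\le1+1/\alpha$. Consequently $\phi/\phi_*$ is pointwise comparable to $\phi(r)=r^{\alpha}$, with comparison constants depending only on $\alpha$.

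It then remains to record that pointwise comparable weights give the same Campanato space with equivalent seminorms: inspecting \eqref{Cam-norm}, replacing $\phi$ by a comparable function changes $\|\cdot\|_{\cL_{1,\phi}}$ only by a bounded factor. Hence $\cL_{1,\phi/\phi_*}=\cL_{1,\alpha}=\Lip_{\alpha}$ as sets, with equivalent seminorms. Feeding this into the conclusion of Theorem~\ref{thm:PWM} yields
\[
 \pwm(\LipN_{\alpha})=\pwm(\cLN_{1,\phi})=\cL_{1,\phi/\phi_*}\cap L_{\infty}=\Lip_{\alpha}\cap L_{\infty},
\]
and the operator-norm equivalence $\|g\|_{Op}\approx\|g\|_{\cL_{1,\phi/\phi_*}}+\|g\|_{L_{\infty}}$ supplied by the theorem transfers to $\|g\|_{Op}\approx\|g\|_{\Lip_{\alpha}}+\|g\|_{L_{\infty}}$ through the same comparison.

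There is essentially no genuine obstacle here beyond this bookkeeping; the corollary is a clean instance of the theorem. The one point worth flagging is that the convergence of the integral defining $\phi_*$ to a bounded function is exactly what separates the case $\alpha>0$ from the borderline case $\phi\equiv1$ (that is, $\alpha=0$) of Corollary~\ref{cor:PWM BMO}: there $\phi_*(r)=1+\log(1/r)$ is comparable to $\log(e/r)$ and grows without bound, so $\phi/\phi_*$ is comparable to $1/\log(e/r)=\psi(r)$ and the target space genuinely changes. For $\alpha>0$ no such logarithmic correction survives, and $\pwm(\LipN_{\alpha})$ collapses back to $\Lip_{\alpha}\cap L_{\infty}$.
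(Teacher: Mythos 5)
Your proposal is correct and is essentially the paper's own route: the authors state that the corollary follows immediately from Theorem~\ref{thm:PWM}, which amounts precisely to your verification that $\phi(r)=r^{\alpha}$ with $p=1$ satisfies the doubling condition and \eqref{int phi}, that $\phi_*$ is bounded (so $\phi/\phi_*\sim\phi$), and that comparable weight functions yield the same Campanato space with equivalent seminorms. Your closing remark contrasting this with the $\alpha=0$ case of Corollary~\ref{cor:PWM BMO} matches the discussion in Example~\ref{exmp:phi} as well.
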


\begin{exmp}\label{exmp:PWM BMO}
Let $\{\cF_n\}_{n\ge0}$, $p$ and $\phi$ satisfy the assumption in Theorem~\ref{thm:PWM}.
For a sequence 
$$
 B_0\supset B_1\supset\cdots\supset B_n\supset\cdots, 
 \quad 
 B_n\in A(\cF_n),
$$
let 
\begin{equation}
 g=\sin h,
 \quad\text{where}\quad
 h=\sum_{n=1}^{\infty}
  \frac{\phi(P(B_n))}{\phi_*(P(B_n))}
   \left(\frac{P(B_{n-1})}{P(B_n)}\chi_{B_n}-\chi_{B_{n-1}}\right).
\end{equation}
Then $h$ is in $\cL_{p,\phi/\phi_*}$,
see Lemma~\ref{lem:fBn} and Remark~\ref{rem:fBn}.
Hence $g\in\cL_{p,\phi/\phi_*}\cap L_{\infty}$,
since $\sin\theta$ is Lipschitz continuous, see Remark~\ref{rem:LipC}.
That is, $g\in\pwm(\cLN_{p,\phi})$.
If $\phi\equiv1$, then $\phi(r)/\phi_*(r)=1/\log(e/r)$ and $g\in\pwm(\BMON)$. 
\end{exmp}

\begin{exmp}\label{exmp:phi}
The following function
satisfies the doubling condition and the property \eqref{int phi}:
\begin{equation*}
 \phi(r)=r^{\alpha}(\log(e/r))^{-\beta} 
 \quad (\alpha\in(-1/p,\infty), \ \beta\in(-\infty,\infty)).
\end{equation*}
If $\alpha\in(-1/p,0)$ and $\beta\in(-\infty,\infty)$, 
then $\phi_*\sim\phi$, that is,
there exists a positive constant $C$ such that
$C^{-1}\phi(r)\le\phi_*(r)\le C\phi(r)$ for all $r\in(0,1]$.
In general, under the assumption of Theorem~\ref{thm:PWM},
if $\phi_*\sim\phi$, then $\cL_{1,\phi/\phi_*}=\BMO$ and then
\begin{equation*}
  \pwm(\cLN_{p,\phi})=\BMO\cap L_{\infty}=L_{\infty}.
\end{equation*}
If $\alpha\in[0,\infty)$ and $\beta\in(-\infty,\infty)$, 
then $\phi_*\not\sim\phi$ and $\phi(r)/\phi_*(r)\to0$ as $r\to0$.
In this case $\cL_{1,\phi/\phi_*}\cap L_{\infty}\ne L_{\infty}$ in general (see also Remark~\ref{rem:dyadic}).
In particular, 
if $\alpha=0$ and $\beta\in(1,\infty)$, 
or if $\alpha\in(0,\infty)$ and $\beta\in(-\infty,\infty)$, 
then $\phi_*\sim1$.
In general, under the assumption of Theorem~\ref{thm:PWM},
if $\phi_*\sim1$, 
then $\cL_{1,\phi/\phi_*}=\cL_{1,\phi}\subset L_{\infty}$ 
by Lemma~\ref{lem:fB} below, 
and then
\begin{equation*}
  \pwm(\cLN_{p,\phi})=\cL_{1,\phi}\cap L_{\infty}=\cLN_{1,\phi}. 
\end{equation*}
Moreover, if $\phi$ is almost increasing, 
then we can use the John-Nirenberg type inequality in
\cite[Theorem~2.9]{MiNaSa2012MathNachr}, that is,
\begin{equation*}
  \pwm(\cLN_{p,\phi})=\cLN_{p,\phi}.
\end{equation*}
We can also take the function
$$
 \phi(r)=r^{\alpha}(\log(e/r))^{-\beta}(\log\log(e/r))^{-\gamma}
 \quad (\alpha\in(-1/p,\infty), \ \beta,\gamma\in(-\infty,\infty)),
$$
and so on.
\end{exmp}

Next, for a martingale $\sqc{f}$ relative to $\{\cF_n\}_{n\ge0}$, 
it is said to be $\cL_{p,\lambda}$-bounded 
if $f_n\in \cL_{p,\lambda}$ $(n\ge0)$ 
and $\sup_{n\ge0}\|f_n\|_{\cL_{p,\lambda}}<\infty$.
Similarly, 
the martingale $\sqc{f}$ is said to be $\cLN_{p,\lambda}$-bounded 
if $f_n\in \cLN_{p,\lambda}$ $(n\ge0)$ 
and $\sup_{n\ge0}\|f_n\|_{\cLN_{p,\lambda}}<\infty$.

Let 
\begin{equation*}
  \cL_{p,\phi}(\cF_n)
  = \{f\in L_1: \text{$f$ is $\cF_n$-measurable and } \|f\|_{\cL_{p,\phi}}<\infty \}
\end{equation*}
and
\begin{equation*}
  \cLN_{p,\phi}(\cF_n)
  = \{f\in L_1: \text{$f$ is $\cF_n$-measurable and } \|f\|_{\cLN_{p,\phi}}<\infty \}.
\end{equation*}

Then we have the following:

\begin{thm}\label{thm:PWM M}
Let $\{\cF_n\}_{n\ge0}$ be regular, $\cF_0=\{\emptyset,\Omega\}$, 
$p\in[1,\infty)$ and $\phi:(0,1]\to(0,\infty)$.
Assume that $\phi$ satisfies the doubling condition
and \eqref{int phi}.
Let $g\in L_1$ and $\sqc{g}$ be its corresponding martingale 
with $g_n=E_ng$ $(n\ge0)$.
If $g\in\pwm(\cLN_{p,\phi})$, then $g_n\in\pwm(\cLN_{p,\phi}(\cF_n))$.
Conversely,
if $g_n\in\pwm(\cLN_{p,\phi}(\cF_n))$ and $\sup_{n\ge0}\|g_n\|_{Op}<\infty$,
then $g\in\pwm(\cLN_{p,\phi})$.
\end{thm}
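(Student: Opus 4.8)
The plan is to derive both implications from Theorem~\ref{thm:PWM}, which identifies $\pwm(\cLN_{p,\phi})=\cL_{p,\phi/\phi_*}\cap L_\infty$ with $\|g\|_{Op}\sim\|g\|_{\cL_{p,\phi/\phi_*}}+\|g\|_{L_\infty}$, together with two soft tools. Write $\psi=\phi/\phi_*$. The first tool is that $E_n$ is a contraction on $\cLN_{p,\phi}$ and on $\cL_{p,\psi}$: fixing a function $h$ and an atom $B\in A(\cF_m)$, if $m\ge n$ then $E_nh$ is $\cF_m$-measurable and the oscillation term vanishes, while if $m<n$ then $E_m(E_nh)=E_mh$, and splitting $B$ into its $\cF_n$-subatoms and applying Jensen's inequality on each gives $\int_B|E_nh-E_mh|^p\,dP\le\int_B|h-E_mh|^p\,dP$; since also $E(E_nh)=Eh$, both contractions follow. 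The second tool is the martingale convergence $g_n=E_ng\to g$ a.s., valid since $g\in L_1$ and $\cF=\sigma(\bigcup_n\cF_n)$.

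For the first assertion, let $g\in\pwm(\cLN_{p,\phi})$; by the closed graph remark following \eqref{sub}, $g$ acts boundedly with norm $\|g\|_{Op}$. Given any $f\in\cLN_{p,\phi}(\cF_n)$, the function $f$ is $\cF_n$-measurable, so taking out what is known yields the identity
\[
 fg_n=f\,E_ng=E_n(fg).
\]
Since $fg\in\cLN_{p,\phi}$ with $\|fg\|_{\cLN_{p,\phi}}\le\|g\|_{Op}\|f\|_{\cLN_{p,\phi}}$, the contraction property gives $fg_n=E_n(fg)\in\cLN_{p,\phi}(\cF_n)$ with $\|fg_n\|_{\cLN_{p,\phi}}\le\|g\|_{Op}\|f\|_{\cLN_{p,\phi}}$. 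Hence $g_n\in\pwm(\cLN_{p,\phi}(\cF_n))$ with $\|g_n\|_{Op}\le\|g\|_{Op}$; in particular $\sup_n\|g_n\|_{Op}<\infty$. This direction uses neither regularity nor \eqref{int phi}.

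For the converse, suppose $g_n\in\pwm(\cLN_{p,\phi}(\cF_n))$ and $M:=\sup_n\|g_n\|_{Op}<\infty$. The crucial input is the localized necessity of Theorem~\ref{thm:PWM}, namely that there is a constant $C$ independent of $n$ with $\|g_n\|_{\cL_{p,\psi}}+\|g_n\|_{L_\infty}\le CM$. Granting this, first $|g_n|\le CM$ a.s. for every $n$, so $g_n\to g$ a.s. forces $\|g\|_{L_\infty}\le CM$. Next fix $m$ and $B\in A(\cF_m)$; for $k\ge m$ we have $E_mg_k=E_mg$ and
\[
 \Big(\frac1{P(B)}\int_B|g_k-E_mg|^p\,dP\Big)^{1/p}\le\psi(P(B))\,CM,
\]
and since $|g_k|\le CM$ and $g_k\to g$ a.s., dominated convergence on $B$ lets us pass to the limit in $k$ to get the same bound for $g$. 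Taking the supremum over $m$ and $B$ gives $\|g\|_{\cL_{p,\psi}}\le CM$, so $g\in\cL_{p,\psi}\cap L_\infty=\pwm(\cLN_{p,\phi})$ by Theorem~\ref{thm:PWM}.

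The main obstacle is establishing the localized necessity with a constant independent of $n$. I would obtain it by rerunning the necessity half of the proof of Theorem~\ref{thm:PWM} with test functions chosen $\cF_n$-measurable. Since $g_n$ is $\cF_n$-measurable, its seminorm $\|g_n\|_{\cL_{p,\psi}}$ only sees atoms $B\in A(\cF_m)$ with $m<n$; to probe the oscillation of $g_n$ on such a $B$ one uses, as in Example~\ref{exmp:PWM BMO}, a function $f_B$ built from the chain $\Omega\supset\cdots\supset B$ through levels $0,\dots,m$, hence automatically $\cF_n$-measurable, which is constant on $B$ with $f_B|_B\sim\phi_*(P(B))$ while $\|f_B\|_{\cL_{p,\phi}}\lesssim1$. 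Here regularity bounds the per-level increments and the doubling condition together with \eqref{int phi} control $\|f_B\|_{\cL_{p,\phi}}$, exactly as in Theorem~\ref{thm:PWM}, so all bounds are uniform in $n$; the estimate $\|g_n\|_{L_\infty}\lesssim\|g_n\|_{Op}$ follows in the same manner. The remaining steps are routine.
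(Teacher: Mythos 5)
Your proposal is correct and follows essentially the same route as the paper: the forward direction via $fg_n=E_n(fg)$ plus the fact that $E_n$ does not increase the $\cLN_{p,\phi}$-norm (the paper cites Proposition~\ref{prop:M} for this, which you re-prove via Jensen's inequality), and the converse by bounding $\|g_n\|_{\cL_{p,\phi/\phi_*}}+\|g_n\|_{L_\infty}$ uniformly in $n$ through the necessity half of Theorem~\ref{thm:PWM} applied with $\cF_n$-measurable test functions, then passing to the limit and invoking the sufficiency half of Theorem~\ref{thm:PWM}. The only difference is expository: you spell out the localization of Theorem~\ref{thm:PWM} to $\cLN_{p,\phi}(\cF_n)$ (truncated atom chains, uniform constants) and the limit passage by dominated convergence, both of which the paper compresses into its citations of Theorem~\ref{thm:PWM} and Proposition~\ref{prop:M}.
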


We show several lemmas in Section~\ref{sec:lemmas} to prove Theorem~\ref{thm:PWM}
in Section~\ref{sec:proof}.
We prove Theorem~\ref{thm:PWM M}
in Section~\ref{sec:pc}.

At the end of this section, we make some conventions. 
Throughout this paper, we always use $C$ to denote a positive constant 
that is independent of the main parameters involved 
but whose value may differ from line to line.
Constants with subscripts, such as $C_p$, is dependent on the subscripts.
If $f\le Cg$, we then write $f\ls g$ or $g\gs
f$; and if $f \ls g\ls f$, we then write $f\sim g$.

\section{Lemmas}\label{sec:lemmas} 

To prove Theorem~\ref{thm:PWM} we show several lemmas in this section.
The first lemma was proved in \cite{Nakai-Sadasue2012JFSA}.
\begin{lem}[{\cite[Lemma~3.3]{Nakai-Sadasue2012JFSA}}]\label{lem:NS3.3}
Let $\{\cF_n\}_{n\ge0}$ be regular.
Then every sequence
$$
 B_0\supset B_1\supset\cdots\supset B_n\supset\cdots, 
 \quad 
 B_n\in A(\cF_n)
$$
has the following property; for each $n\ge1$,
$$
 B_n=B_{n-1}\ \text{or}\ 
 \left(1+\frac1R\right)P(B_n)\le P(B_{n-1})\le R P(B_n),
$$
where $R$ is the constant in \eqref{regular}.
\end{lem}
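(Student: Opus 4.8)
The plan is to read off both inequalities by applying the regularity condition \eqref{regular} to two non-negative martingales built from indicator functions of the given atoms. Throughout, fix $n\ge1$ and recall that $B_n\subset B_{n-1}$ with $B_{n-1}\in A(\cF_{n-1})$ and $B_n\in A(\cF_n)$; since $\cF_{n-1}\subset\cF_n$, we also have $B_{n-1}\in\cF_n$, hence $B_{n-1}\setminus B_n\in\cF_n$.

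First I would establish the upper bound $P(B_{n-1})\le R\,P(B_n)$. Consider the non-negative martingale $f=\sqc{f}$ with $f_k=E_k[\chi_{B_n}]$. Since $B_n$ is an $\cF_n$-atom, $\chi_{B_n}$ is $\cF_n$-measurable, so $f_n=\chi_{B_n}$ and thus $f_n=1$ on $B_n$; and since $B_{n-1}$ is an $\cF_{n-1}$-atom containing $B_n$, the function $f_{n-1}=E_{n-1}[\chi_{B_n}]$ is constant on $B_{n-1}$, equal to $P(B_n)/P(B_{n-1})$. Evaluating the regularity inequality $f_n\le Rf_{n-1}$ at a point of $B_n$ gives $1\le R\,P(B_n)/P(B_{n-1})$, which is the desired bound.

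For the lower bound I would argue by cases. If $P(B_{n-1}\setminus B_n)=0$, then $P(B_n)=P(B_{n-1})$, i.e. $B_n=B_{n-1}$ modulo a null set, and we are in the first alternative. Otherwise $P(B_{n-1}\setminus B_n)>0$, and since $\cF_n$ is generated by countable atoms and $B_{n-1}\setminus B_n\in\cF_n$, there is an atom $B'\in A(\cF_n)$ with $B'\subset B_{n-1}\setminus B_n$ and $P(B')>0$. Now apply \eqref{regular} to the non-negative martingale $g_k=E_k[\chi_{B_{n-1}\setminus B_n}]$. As before $g_n=\chi_{B_{n-1}\setminus B_n}$ equals $1$ on $B'$, while $g_{n-1}$ is constant on $B_{n-1}$, equal to $(P(B_{n-1})-P(B_n))/P(B_{n-1})$. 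Evaluating $g_n\le Rg_{n-1}$ on $B'$ yields $P(B_{n-1})\le R\,(P(B_{n-1})-P(B_n))$, hence $P(B_{n-1})\ge\frac{R}{R-1}P(B_n)\ge(1+\tfrac1R)P(B_n)$, which is the second alternative.

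The argument is essentially computational once the two martingales are chosen; the only point requiring care is the dichotomy in the lower bound, namely guaranteeing an $\cF_n$-atom of positive measure inside $B_{n-1}\setminus B_n$ whenever that set is not null, so that $g_n$ genuinely attains the value $1$ there. This is exactly where the hypothesis that each $\cF_n$ is generated by countable atoms enters, and it is the main (though mild) obstacle to a fully rigorous write-up.
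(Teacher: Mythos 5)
Your proof is correct. Note that this paper does not actually prove the lemma---it quotes it from \cite[Lemma~3.3]{Nakai-Sadasue2012JFSA}---but your argument, applying the regularity inequality to the two indicator martingales $E_k[\chi_{B_n}]$ and $E_k[\chi_{B_{n-1}\setminus B_n}]$ and then using $R/(R-1)\ge 1+1/R$, is precisely the standard proof of that cited result. The only superfluous step is the extraction of an atom $B'\subset B_{n-1}\setminus B_n$: since $g_n=\chi_{B_{n-1}\setminus B_n}$ a.s., $g_{n-1}=(P(B_{n-1})-P(B_n))/P(B_{n-1})$ a.s.\ on $B_{n-1}$, and $g_n\le Rg_{n-1}$ a.s., all three statements hold simultaneously at almost every point of the positive-measure set $B_{n-1}\setminus B_n$, which already gives the desired inequality without invoking the countable-atom structure.
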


For a function $f\in L_1$ and an atom $B\in A(\cF_n)$, 
let
$$
 f_B=\frac1{P(B)}\int_B f\,dP.
$$
For a function $\phi:(0,1]\to(0,\infty)$, 
let $\phi_*$ be defined by \eqref{phi_*}.
If $\phi$ satisfies the doubling condition,
then $\phi(r)\le C\phi_*(r)$ for all $r\in(0,1]$.

\begin{lem}\label{lem:fB}
Let $\{\cF_n\}_{n\ge0}$ be regular, $\cF_0=\{\emptyset,\Omega\}$, 
$p\in[1,\infty)$ and $\phi:(0,1]\to(0,\infty)$.
Assume that $\phi$ satisfies the doubling condition.
For $f\in\cLN_{p,\phi}$ and $B\in\cup_{n\ge0}A(\cF_n)$, 
\begin{equation}\label{fB}
 |f_B|\le C\phi_*(P(B))\|f\|_{\cLN_{p,\phi}}.
\end{equation}
\end{lem}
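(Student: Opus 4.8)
The goal is to bound $|f_B|$ for an atom $B\in A(\cF_n)$ by telescoping along a decreasing chain of atoms down from $\Omega$ to $B$. The plan is to fix $B\in A(\cF_n)$ and choose atoms $B_k\in A(\cF_k)$ for $0\le k\le n$ with $B_0=\Omega\supset B_1\supset\cdots\supset B_n=B$; such a chain exists because each $\cF_k$-atom is a union of $\cF_{k+1}$-atoms. Writing $f_{B_k}=(f)_{B_k}$, I would estimate the successive differences $|f_{B_k}-f_{B_{k-1}}|$ and then sum over $k$, using that $f_{B_0}=Ef$ (since $\cF_0=\{\emptyset,\Omega\}$) so that $|f_{B_0}|=|Ef|\le\|f\|_{\cLN_{p,\phi}}$.

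For a single step, note that on $B_k$ the function $E_{k-1}f$ equals the constant $f_{B_{k-1}}$, so by Jensen's (or H\"older's) inequality
\begin{equation*}
 |f_{B_k}-f_{B_{k-1}}|
 =\left|\frac1{P(B_k)}\int_{B_k}(f-E_{k-1}f)\,dP\right|
 \le\left(\frac1{P(B_k)}\int_{B_k}|f-E_{k-1}f|^p\,dP\right)^{1/p}.
\end{equation*}
Since $B_k\subset B_{k-1}$ and $B_{k-1}\in A(\cF_{k-1})$, I would enlarge the domain of integration to $B_{k-1}$ and pull out the ratio $P(B_{k-1})/P(B_k)$, then apply the definition of $\|f\|_{\cL_{p,\phi}}$ on the atom $B_{k-1}$ to obtain
\begin{equation*}
 |f_{B_k}-f_{B_{k-1}}|
 \le\left(\frac{P(B_{k-1})}{P(B_k)}\right)^{1/p}\phi(P(B_{k-1}))\,\|f\|_{\cL_{p,\phi}}.
\end{equation*}
Here the regularity of $\{\cF_n\}_{n\ge0}$ enters through Lemma~\ref{lem:NS3.3}: either $B_k=B_{k-1}$ (so the difference vanishes) or $P(B_{k-1})\le R\,P(B_k)$, which controls the ratio $P(B_{k-1})/P(B_k)$ by the constant $R$. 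Combining the nontrivial steps gives $|f_{B_k}-f_{B_{k-1}}|\le C\,\phi(P(B_{k-1}))\,\|f\|_{\cL_{p,\phi}}$.

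The remaining task is to sum $\sum_k\phi(P(B_{k-1}))$ over the chain and recognize the result as (a constant times) $\phi_*(P(B))$. Here Lemma~\ref{lem:NS3.3} again supplies geometric decay: the nontrivial steps satisfy $(1+1/R)P(B_k)\le P(B_{k-1})$, so the values $P(B_{k-1})$ increase by at least a fixed factor at each effective step, and the doubling condition on $\phi$ lets me compare $\phi(P(B_{k-1}))$ with the integral average of $\phi(t)/t$ over the interval $[P(B_k),P(B_{k-1})]$. Telescoping these comparisons converts the sum into $\int_{P(B)}^{1}\phi(t)/t\,dt$ up to a constant, and adding the $|Ef|$ contribution yields the factor $1+\int_{P(B)}^1\phi(t)/t\,dt=\phi_*(P(B))$. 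The main obstacle I anticipate is this last comparison: one must handle the repeated "$B_k=B_{k-1}$" steps carefully so they contribute nothing, and verify that the doubling condition legitimately converts the discrete sum of $\phi$-values along a geometrically spaced chain into the continuous integral defining $\phi_*$, with a constant independent of $B$ and $n$.
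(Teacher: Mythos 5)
Your proposal is correct and follows essentially the same route as the paper's proof: a telescoping chain of atoms from $\Omega$ down to $B$, the single-step estimate via Jensen's inequality and regularity (Lemma~\ref{lem:NS3.3}), the doubling condition to convert the geometrically spaced sum $\sum_k\phi(P(B_{k-1}))$ into $\int_{P(B)}^1\phi(t)/t\,dt$, and the $|Ef|$ term supplying the additive constant $1$ in $\phi_*$. The only cosmetic difference is that the paper extracts the subsequence of distinct atoms up front, whereas you keep the full chain and observe that the repeated steps contribute nothing.
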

\begin{proof}
By Lemma~\ref{lem:NS3.3}, we can choose $B_{k_j}\in A(\cF_{k_j})$, $0=k_0<k_1<\cdots<k_m\le n$, 
such that $B_{k_0}\supset B_{k_1}\supset B_{k_2} \supset\cdots\supset B_{k_m}=B$
and that $(1+1/R)P(B_{k_j})\le P(B_{k_{j-1}})\le RP(B_{k_j})$.
Then, we have
\begin{align*}
 |f_{B_{k_j}}-f_{B_{k_{j-1}}}|
 &=
 \left|\frac1{P(B_{k_j})}\int_{B_{k_j}}f(\omega)\,dP-\frac1{P(B_{k_{j-1}})}\int_{B_{k_{j-1}}}f(\omega)\,dP \right|\\
 &=
 \left|\frac1{P(B_{k_j})}\int_{B_{k_j}}[f-E_{k_{j-1}}f](\omega)\,dP\right|\\
 &\le
 \left(\frac1{P(B_{k_j})}\int_{B_{k_j}}|f-E_{k_{j-1}}f|^p\,dP\right)^{1/p} \\
 &\ls
 \left(\frac1{P(B_{k_{j-1}})}\int_{B_{k_{j-1}}}|f-E_{k_{j-1}}f|^p\,dP\right)^{1/p} \\
 &\le
 \phi (P(B_{k_{j-1}}))\ \|f\|_{\cLN_{p,\phi}}. 
\end{align*}
Since $\phi$ satisfies the doubling condition, 
\begin{align*}
 |f_{B}-f_{B_0}|
 &\le
 \sum_{j=1}^m |f_{B_{k_j}}-f_{B_{k_{j-1}}}|\\
 &\ls
 \sum_{j=1}^m \phi (P(B_{k_{j-1}}))\ \|f\|_{\cLN_{p,\phi}}\\
 &\ls
 \sum_{j=1}^m \int_{P(B_{k_{j}})}^{P(B_{k_{j-1}})}\frac{\phi (t)}{t}\,dt\ \|f\|_{\cLN_{p,\phi}}\\
 &=
 \int_{P(B)}^{1}\frac{\phi (t)}{t}\,dt\ \|f\|_{\cLN_{p,\phi}}\\
 &=
 \{\phi_*(P(B))-1\}\, \|f\|_{\cLN_{p,\phi}}.
\end{align*}
On the other hand, 
$$
 |f_{B_0}|=|Ef|\le\|f\|_{\cLN_{p,\phi}}.
$$
Therefore, we have \eqref{fB}. 
\end{proof}

\begin{lem}\label{lem:chiB}
Let $\cF_0=\{\emptyset,\Omega\}$, $p\in[1,\infty)$ and $\phi:(0,1]\to(0,\infty)$.
Assume 
that $r\phi(r)^p$ is almost increasing.
For any atom $B\in\cup_{n\ge0}A(\cF_n)$, 
the characteristic function $\chi_B$ is in $\cLN_{p,\phi}$ and 
there exists a positive constant $C$, independent of $B$, such that
\begin{equation}\label{eqn:chiB}
 \|\chi_B\|_{\cLN_{p,\phi}}\le \frac{C}{\phi(P(B))}.
\end{equation}
\end{lem}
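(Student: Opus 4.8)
The plan is to split the norm as $\|\chi_B\|_{\cLN_{p,\phi}}=\|\chi_B\|_{\cL_{p,\phi}}+|E\chi_B|$ and estimate each piece separately, fixing $B\in A(\cF_m)$ for some $m\ge0$. The second term is immediate: $|E\chi_B|=P(B)$, and I would bound it by $C/\phi(P(B))$ using that the almost increasing hypothesis on $r\phi(r)^p$ forces a growth bound $\phi(r)\le C\phi(1)r^{-1/p}$, obtained by comparing $r\phi(r)^p$ with its value at $r=1$. Then $P(B)\phi(P(B))\le C\phi(1)P(B)^{1-1/p}\le C\phi(1)$, since $p\ge1$ and $P(B)\le1$.

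For the seminorm, I would examine $\chi_B-E_n\chi_B$ on each atom $A\in A(\cF_n)$ according to the position of $n$ relative to $m$. If $n\ge m$, then $\chi_B$ is $\cF_n$-measurable, so $E_n\chi_B=\chi_B$ and the difference vanishes; such $n$ contribute nothing. If $n<m$, the key structural observation is that, since $A\cap B\in\cF_m$ is a subset of the atom $B$, one has $P(A\cap B)\in\{0,P(B)\}$ for every $A\in A(\cF_n)$; as the atoms of $\cF_n$ partition $\Omega$, there is exactly one atom $A_0\in A(\cF_n)$ with $B\subset A_0$ (mod null sets), while $\chi_B-E_n\chi_B=0$ on every other atom. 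Hence the supremum over atoms reduces to the single atom $A_0$, on which $E_n\chi_B=P(B)/P(A_0)$ is constant.

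It then remains to estimate $\int_{A_0}|\chi_B-E_n\chi_B|^p\,dP$. Splitting $A_0$ into $B$ and $A_0\setminus B$ and using $P(B)\le P(A_0)$ together with $p\ge1$, a direct computation gives $\int_{A_0}|\chi_B-E_n\chi_B|^p\,dP\le2P(B)$. Therefore the atomic term is at most
$$
 \frac{1}{\phi(P(A_0))}\left(\frac{2P(B)}{P(A_0)}\right)^{1/p}
 =
 2^{1/p}\left(\frac{P(B)\phi(P(B))^p}{P(A_0)\phi(P(A_0))^p}\right)^{1/p}\frac{1}{\phi(P(B))}.
$$
Now the almost increasing property of $r\phi(r)^p$ applied to $P(B)\le P(A_0)$ yields $P(B)\phi(P(B))^p\le C\,P(A_0)\phi(P(A_0))^p$, so the displayed quantity is bounded by $C/\phi(P(B))$ uniformly in $n$ and $A_0$. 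Taking the supremum gives $\|\chi_B\|_{\cL_{p,\phi}}\le C/\phi(P(B))$, and combining with the bound on $|E\chi_B|$ yields \eqref{eqn:chiB}.

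The only genuinely delicate point is the reduction in the second paragraph, namely recognizing that for $n<m$ exactly one atom $A_0$ sees $B$ while all others give a zero difference; once that is in place, the remaining steps are the elementary integral bound and a single application of the almost increasing hypothesis, which is precisely tailored to produce the factor $1/\phi(P(B))$.
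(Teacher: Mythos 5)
Your proof is correct and follows essentially the same route as the paper's: both reduce the supremum, for each level $n<m$, to the unique atom $A_0\in A(\cF_n)$ containing $B$ (all other atoms giving a vanishing difference), bound $\int_{A_0}|\chi_B-E_n\chi_B|^p\,dP$ by an elementary computation, and then apply the almost increasing property of $r\phi(r)^p$ to trade $\phi(P(A_0))$ for $\phi(P(B))$, handling $|E\chi_B|=P(B)$ by the same hypothesis. Your bound of the integral by $2P(B)$ is a slightly cleaner packaging of the exact expression the paper writes out, but the underlying argument is identical.
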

\begin{proof}
Let $B\in A(\cF_n)$ and $B'\in A(\cF_k)$. Let $B_j\in A(\cF_j)$, $0\le j\le n$, 
such that 
$B_0\supset B_1\supset\cdots\supset B_n=B$.

If $k\geq n$, then $\chi_B-E_k\chi_B=0$ and 
$$
  \int_{B'}|\chi_B-E_k\chi_B|^p\, dP=0.
$$
If $k<n$ and $B'\not=B_k$, then $B'\cap B_k=\emptyset$ and 
$$
  \int_{B'}|\chi_B-E_k\chi_B|^p\, dP=0.
$$
Hence, we have 
$$
  \|\chi_B\|_{\cL_{p,\phi}}=\sup_{k<n}\frac{1}{\phi (P(B_k))}\left(\frac{1}{P(B_k)}\int_{B_k}|\chi_B-E_k\chi_B|^p\, dP\right)^{1/p}.
$$
For $k<n$, since $r\phi(r)^p$ is almost increasing,
\begin{align*}
  &
 \frac{1}{\phi (P(B_k))^p}\frac{1}{P(B_k)}\int_{B_k}|\chi_B-E_k\chi_B|^p\, dP\\
  &=
 \frac{1}{\phi (P(B_k))^pP(B_k)} \\
  &\phantom{****}\times
 \left\{
 P(B_n)\left(1-\frac{P(B_n)}{P(B_k)}\right)^p+(P(B_k)-P(B_n))\left(\frac{P(B_n)}{P(B_k)}\right)^p\right\} \\
  &\ls
 \frac{1}{\phi (P(B_n))^pP(B_n)} \\
  &\phantom{****}\times
 \left\{
 P(B_n)\left(1-\frac{P(B_n)}{P(B_k)}\right)^p+(P(B_k)-P(B_n))\left(\frac{P(B_n)}{P(B_k)}\right)^p\right\} \\
  &=
 \frac{1}{\phi (P(B_n))^p}\left\{
 \left(1-\frac{P(B_n)}{P(B_k)}\right)^p+\left(1-\frac{P(B_n)}{P(B_k)}\right)\left(\frac{P(B_n)}{P(B_k)}\right)^{p-1}\right\} \\
  &\ls
 \frac{1}{\phi (P(B_n))^p}= \frac{1}{\phi (P(B))^p}.
\end{align*}
Therefore, we have 
\begin{equation}\label{chiB phi}
 \|\chi_B\|_{\cL_{p,\phi}}\ls \frac{1}{\phi (P(B))}.
\end{equation}
On the other hand, since $r\phi(r)^p$ is almost increasing, 
\begin{equation}\label{chiB 0}
 |E\chi_B|
 =P(B)\le P(B)^{1/p}
 \ls \frac{1}{\phi (P(B))}.
\end{equation}
Combining \eqref{chiB phi} and \eqref{chiB 0}, we have \eqref{eqn:chiB}.
\end{proof}

\begin{lem}\label{lem:fBn}
Let $\{\cF_n\}_{n\ge0}$ be regular, $\cF_0=\{\emptyset,\Omega\}$, 
$p\in[1,\infty)$ and $\phi:(0,1]\to(0,\infty)$.
Assume that $\phi$ satisfies the doubling condition and \eqref{int phi}.
For a sequence 
$$
 B_0\supset B_1\supset\cdots\supset B_n\supset\cdots, 
 \quad 
 B_n\in A(\cF_n),
$$
let 
$$
 f_0=\chi_{B_0},
\quad
 u_k=
  \phi(P(B_k))\left(\frac{P(B_{k-1})}{P(B_k)}\chi_{B_k}-\chi_{B_{k-1}}\right),
$$
and let
\begin{equation}
 f_n=f_0
 +\sum_{k=1}^{n} u_k.
\end{equation}
Then $\sqc{f}$ is a martingale and $\cLN_{p,\phi}$-bounded.
The sum $f\equiv f_0+\sum_{k=1}^{\infty}u_k$ converges a.s. and in $L_p$, and
$E_nf=f_n$ for $n\ge0$.
Moreover, there exist positive constants $C_1$ and $C_2$, 
independent of the sequence of atoms,
such that
\begin{equation}
 \|f\|_{\cLN_{p,\phi}}\le C_1
 \quad\text{and}\quad
 |f_{B_n}|\ge C_2\phi_*(P(B_n)), \ n\ge0.
\end{equation}
\end{lem}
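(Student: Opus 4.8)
The plan is to work directly with the explicit partial sums. First I would record that $\cF_0=\{\emptyset,\Omega\}$ forces $B_0=\Omega$ and $f_0\equiv1$, and that each $u_k$ is a genuine martingale difference: since $B_k\subset B_{k-1}$ are atoms, $E_{k-1}\chi_{B_k}=(P(B_k)/P(B_{k-1}))\chi_{B_{k-1}}$, so $E_{k-1}u_k=0$ and hence $E_{k-1}f_k=f_{k-1}$, giving the martingale property. Evaluating $f_n$ atom by atom, it is constant on each ``ring'' $B_{j-1}\setminus B_j$ and on $B_n$; writing $S_n=\sum_{k=1}^n\phi(P(B_k))\big(P(B_{k-1})/P(B_k)-1\big)\ge0$ one finds $f_n=1+S_{j-1}-\phi(P(B_j))$ on $B_{j-1}\setminus B_j$ $(j\le n)$ and $f_n=1+S_n$ on $B_n$.

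The arithmetic backbone is the doubling estimate $\phi(P(B_k))\sim\int_{P(B_k)}^{P(B_{k-1})}\phi(t)/t\,dt$ on each nontrivial step, valid because Lemma~\ref{lem:NS3.3} confines the ratio $P(B_{k-1})/P(B_k)$ to $[1+1/R,R]$; this telescopes to $S_n\sim\phi_*(P(B_n))-1$. The lower bound follows at once: since $E_nf=f_n$ is constant on the atom $B_n$, $f_{B_n}=1+S_n\ge C_2\,\phi_*(P(B_n))$, using $\phi_*\ge1$. For convergence I would show $\sum_k\|u_k\|_{L_p}<\infty$. A direct computation gives $\|u_k\|_{L_p}^p\ls\phi(P(B_k))^pP(B_k)$, and doubling together with \eqref{int phi} yields $\phi(r)^pr\sim\int_0^r\phi(t)^p\,dt=:G(r)$ as well as $G(r/2)\le\rho\,G(r)$ for some $\rho\in(0,1)$; since $P(B_k)$ drops by a fixed factor at each nontrivial step, $\|u_k\|_{L_p}$ decays geometrically and the series converges. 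Thus $\sum u_k$ converges in $L_p$; a.s.\ convergence is immediate because $f_n$ is eventually constant on each ring, and $E_nf=f_n$ then follows from $L_p$-continuity of $E_n$ and $E_nf_m=f_n$ for $m\ge n$.

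The substantive point is $\|f\|_{\cL_{p,\phi}}\le C$. Because $E_mf=f_m$ and $f-f_m=\sum_{k>m}u_k$ is supported in $B_m$ (any atom of $\cF_m$ other than $B_m$ is disjoint from every later $B_k$), the supremum defining $\|f\|_{\cL_{p,\phi}}$ collapses to the single estimate $\int_{B_m}|f-f_m|^p\,dP\ls\phi(P(B_m))^pP(B_m)$ for each $m$. On $B_m$ one has $f-f_m=S_{j-1}-S_m-\phi(P(B_j))$ on the ring $B_{j-1}\setminus B_j$ $(j>m)$, and each such value is $\ls\int_{P(B_j)}^{P(B_m)}\phi(t)/t\,dt=:F(P(B_j))$. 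The key analytic lemma I would isolate is the Hardy-type inequality, valid for every $r\in(0,1]$,
\[
 \int_0^r F(s)^p\,ds\ls r\,\phi(r)^p,\qquad F(s)=\int_s^r\frac{\phi(t)}t\,dt,
\]
proved by integration by parts (the boundary terms vanish), Hölder with exponents $p/(p-1)$ and $p$, and absorption, which reduces the whole matter to \eqref{int phi}. Applying it with $r=P(B_m)$ and comparing the ring-sum $\sum_{j>m}(P(B_{j-1})-P(B_j))F(P(B_j))^p$ to $\int_0^{P(B_m)}F^p$ — via $F(P(B_j))\le F(s)+C\phi(P(B_j))$ for $s$ in the $j$-th ring — bounds it by the integral plus $\sum_{j>m}(P(B_{j-1})-P(B_j))\phi(P(B_j))^p$, which is again geometrically summable to $\ls\phi(P(B_m))^pP(B_m)$; the contribution of $\bigcap_jB_j$ is absorbed by the same integral bound. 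The identical estimates applied to the finite sums $f_n-f_m$ give $\sup_n\|f_n\|_{\cLN_{p,\phi}}\le C_1$. I expect the discrete-to-continuous comparison feeding the Hardy-type inequality, and the verification of the geometric decay of $G$ from \eqref{int phi}, to be the main obstacles; everything else is bookkeeping with Lemma~\ref{lem:NS3.3} and the doubling condition.
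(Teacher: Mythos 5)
Your proof is correct, but its technical core differs from the paper's. After the same martingale observation, the same support remark (everything lives on $B_m$), and essentially the same lower-bound computation $f_{B_n}=1+S_n\gs\phi_*(P(B_n))$, the paper proves $L_p$-convergence and the $\cL_{p,\phi}$ bound in a single stroke: writing the nontrivial steps as $k_j$, the triangle inequality gives
$\|f-f_n\|_{L_p}\le\sum_{k_j>n}\|u_{k_j}\|_{L_p}\ls\sum_{k_j>n}\phi(P(B_{k_j}))P(B_{k_j})^{1/p}$,
each summand is dominated via doubling by $\int_{P(B_{k_j})}^{P(B_{k_{j-1}})}\phi(t)t^{1/p-1}\,dt$, and the telescoped integral is $\ls\phi(P(B_n))P(B_n)^{1/p}$ by the inequality $\int_0^r\phi(t)t^{1/p-1}\,dt\le C_p\phi(r)r^{1/p}$, which the paper imports from \cite[Lemma~7.1]{Nakai2008AMSin}; since $f-f_n$ is supported in $B_n$, this one tail bound \emph{is} the norm estimate. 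You instead (a) obtain convergence from the geometric decay $G(\theta r)\le\rho G(r)$, $\rho<1$, of $G(r)=\int_0^r\phi(t)^p\,dt\sim r\phi(r)^p$ (a correct consequence of doubling and \eqref{int phi}), and (b) estimate the norm by evaluating $f-f_m$ ring by ring and invoking a Hardy-type inequality $\int_0^r\bigl(\int_s^r\phi(t)t^{-1}\,dt\bigr)^p\,ds\ls r\phi(r)^p$, proved by parts, H\"older and absorption. Both ingredients are sound, with two small caveats: in the integration by parts the boundary term at $0$ need not vanish, but it is $-\epsilon F(\epsilon)^p\le0$ and so can simply be dropped; and your a.s.-convergence remark should also account for $\bigcap_jB_j$, though if that set has positive measure only finitely many steps are nontrivial and everything is finite. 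What your route buys is self-containedness (the Hardy-type inequality replaces the external citation, and the ring picture makes $f$ completely explicit); what it costs is economy, since your own step (a) already yields $\sum_{k_j>m}\|u_{k_j}\|_{L_p}\ls G(P(B_m))^{1/p}\sim\phi(P(B_m))P(B_m)^{1/p}$, which together with the support observation gives the norm bound at once — so the entire ring/Hardy analysis in (b), while correct, is logically redundant.
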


\begin{proof}
Since $E_n[u_k]=0$ for $k>n$, $\sqc{f}$ is a martingale. 
We show that the sum $f_0+\sum_{k=1}^{\infty}u_k$ converges in $L_p$. 
If $\lim_{k\to\infty}P(B_k)>0$
then the convergence is clear because 
there exists $m$ such that $B_m=B_n$ for all $n\ge m$. 
We assume that $\lim_{k\to\infty}P(B_k)=0$. 
By Lemma~\ref{lem:NS3.3}, 
we can take a sequence of integers 
$0=k_0<k_1<\cdots<k_j<\cdots$ that satisfies 
\begin{equation}\label{ball}
(1+1/R)P(B_{k_j})\le P(B_{k_{j-1}})\le RP(B_{k_j}),
\end{equation}
and $B_{k_{j-1}}=B_{k}$ if $k_{j-1}\le k< k_j$.
In this case we can write
$$
 f_n
 =
 \chi_{B_0}
 +\sum_{1\le k_j\le n} \phi (P(B_{k_j}))
  \left(\frac{P(B_{k_{j-1})}}{P(B_{k_j})}\chi_{B_{k_j}}-\chi_{B_{k_{j-1}}}\right).
$$
Note that, by Remark~\ref{rem:phi} and \cite[Lemma~7.1]{Nakai2008AMSin}, 
the doubling condition and \eqref{int phi} implies
\begin{equation}\label{int phi 1/p}
 \int_0^r\phi(t)t^{1/p-1}\,dt\le C_p\phi(r)r^{1/p}
 \quad\text{for all $r\in(0,1]$}.
\end{equation}
Using the doubling condition and \eqref{int phi 1/p}, we have
\begin{align}\label{absolute}
&
\sum_{k_j>n}
\phi (P(B_{k_j}))
\left\|\frac{P(B_{k_{j-1})}}{P(B_{k_j})}\chi_{B_{k_j}}-\chi_{B_{k_{j-1}}}\right\|_{L_p} 
\\
&\le
\sum_{k_j>n}
\phi (P(B_{k_j}))
(R\|\chi_{B_{k_j}}\|_{L_p}+\|\chi_{B_{k_{j-1}}}\|_{L_p}) \notag \\
&\le
2R\sum_{k_j>n}\phi (P(B_{k_j})) P(B_{k_j})^{1/p}\notag \\
&\le
C\sum_{k_j>n}\int_{P(B_{k_j})}^{P(B_{k_{j-1}})}\phi(t)t^{1/p-1}\,dt\notag \\
&\le
C\int_0^{P(B_{n})} \phi(t)t^{1/p-1}\,dt\notag \\
&\le
CC_p\phi (P(B_{n}))P(B_{n})^{1/p}. \notag
\end{align}
We can deduce from \eqref{absolute} that 
$f\equiv f_0+\sum_{k=1}^{\infty}u_k$ converges in $L_p$. 
By the martingale convergence theorem, $f_0+\sum_{k=1}^{\infty}u_k$ also converges almost surely. 
Moreover, we have $E_nf=f_n$ and
\begin{equation}\label{onBn}
\left(\frac1{P(B_n)}\int_{B_n} |f-E_nf|^p\,dP\right)^{1/p} 
\le
CC_p\phi (P(B_n)).
\end{equation}
For $B'\in A(\cF_n)$, we have
\begin{equation}\label{support}
(f-E_nf)\chi_{B'}=
\begin{cases}
 f-E_nf & (B' = B_n) \\
 \displaystyle
 0& (B' \not= B_n).
\end{cases}
\end{equation}
Combining \eqref{onBn} and \eqref{support}, we have
$
 \|f\|_{\cL_{p,\phi}}\le C
$
where $C$ is a positive constant independent of the sequence of atoms. 
Moreover, since $B_0=\Omega$,
$$
 |Ef|=|f_0|=1.
$$
Therefore, 
$
 \|f\|_{\cLN_{p,\phi}}\le C_1 
$ 
where $C_1$ is a positive constant independent of the sequence of atoms.

We now show $|f_{B_n}|\ge C_2\phi_*(P(B_n))$. On the atom $B_n$, we have
\begin{equation*}
 f_n=1+\sum_{1\le k_j\le n}
  \phi(P(B_{k_j}))\left(\frac{P(B_{k_{j-1}})}{P(B_{k_j})}-1\right)
 \ge
  1+\frac1{R} \sum_{1\le k_j\le n} \phi(P(B_{k_j})).
\end{equation*}
Therefore, we have 
\begin{align*}
 |f_{B_n}|
  &=
 \left|\frac1{P(B_n)}\int_{B_n}f_n\, dP\right|
\\
  &\ge
   1+\frac1{R} \sum_{1\le k_j\le n} \phi(P(B_{k_j}))
\\
 &\sim
  1+\sum_{1\le k_j\le n}\int_{P(B_{k_j})}^{P(B_{k_{j-1}})}\frac{\phi(t)}{t}\,dt 
\\
  &=
   1+\int_{P(B_n)}^{1}\frac{\phi(t)}{t}\, dt
  =
 \phi_*(P(B_n))
\end{align*}
That is, 
$
 |f_{B_n}|\ge C_2\phi_*(P(B_n))
$
where $C_2$ is a positive constant independent of the sequence of atoms.
\end{proof}

\begin{rem}\label{rem:fBn}
From the proof of Lemma~\ref{lem:fBn} we see that, for
\begin{equation}\label{h}
 h=\sum_{k=1}^{\infty} u_k,
 \quad
 h_0=0,\quad
 h_n=\sum_{k=1}^{n} u_k \ (n\ge1), 
\end{equation}
$h$ is in $\cL_{p,\phi}$ and 
$\sqc{h}$ is its corresponding martingale with $h_n=E_nh$ $(n\ge0)$.
\end{rem}

\begin{rem}\label{rem:dyadic}
Let $(\Omega,\cF,P)$ be as follows:
\begin{align*}
 &
\Omega=[0,1), \quad 
 A(\cF_n)=\{I_{n,j}=[j2^{-n},(j+1)2^{-n}):j=0,1,\cdots,2^n-1\}\\
&
\cF_n=\sigma(A(\cF_n)), \quad\cF=\sigma(\cup_{n}\cF_n), \quad P =\text{ the Lebesgue measure}.
\end{align*}
If $\phi(r)=1/\log(e/r)$,
then $h$ in \eqref{h} is unbounded. 
Actually, 
$$
 u_k=\frac1{1+k\log2}(2\chi_{B_k}-\chi_{B_{k-1}}),
$$
and
$$
 h=\sum_{k=1}^n\frac1{1+k\log2}-\frac1{1+(n+1)\log2}
 \quad\text{on $B_n\setminus B_{n+1}$}.
$$
\end{rem}

\begin{rem}\label{rem:LipC}
If $F:\C\to\C$ is Lipschitz continuous, that is, 
$$
 |F(z_1)-F(z_2)|\le C|z_1-z_2|, \quad z_1,z_2\in\C,
$$
then, for $B\in\cF_n$,
$$
 \int_{B}|F(f)-E_n[F(f)]|\,dP
 \le
 2C
 \int_{B}|f-E_nf|\,dP.
$$
Actually,
\begin{align*}
 &\int_{B}|F(f)-E_n[F(f)]|\,dP \\
 &\le
 \int_{B}|F(f)-F(E_nf)|\,dP
 +\int_{B}|F(E_nf)-E_n[F(f)]|\,dP \\
 &=
 \int_{B}|F(f)-F(E_nf)|\,dP
 +\int_{B}|E_n[F(E_nf)-F(f)]|\,dP \\
 &\le 
 2\int_{B}|F(f)-F(E_nf)|\,dP \\
 &\le 
 2C\int_{B}|f-E_nf|\,dP.
\end{align*}
\end{rem}

\begin{lem}\label{lem:NY3.3}
Let $p\in[1,\infty)$ and $\phi:(0,1]\to(0,\infty)$.
Suppose that $f\in\cL_{p,\phi}$ and $g\in L_{\infty}$.
Then $fg\in\cL_{p,\phi}$ if and only if 
\begin{equation}\label{eqn:Ffg}
  F(f,g)
  :=
  \sup_{n\ge0}\sup_{B\in A(\cF_n)}
  \frac{|f_B|}{\phi(P(B))}\left(\frac{1}{P(B)}\int_B|g-E_ng|^p\,dP\right)^{1/p}
  <\infty.
\end{equation}
In this case,
\begin{equation}\label{Ffg}
  \left|F(f,g)-\|fg\|_{\cL_{p,\phi}}\right|
  \le
  2\|f\|_{\cL_{p,\phi}}\|g\|_{L_{\infty}}.
\end{equation}
\end{lem}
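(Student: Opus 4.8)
The plan is to work atom-by-atom and to compare the mean oscillation of $fg$ with that of $g$ weighted by $|f_B|$, using a single algebraic decomposition together with the reverse triangle inequality in $L^p$. Fix $n\ge0$ and an atom $B\in A(\cF_n)$, and recall that on $B$ the conditional expectations $E_nf$ and $E_ng$ are constant, with values $f_B$ and $g_B$. I would write, on $B$,
\begin{equation*}
 fg-E_n(fg)=f_B(g-E_ng)+R,\qquad R:=(f-f_B)g+\big(f_B\,E_ng-E_n(fg)\big),
\end{equation*}
so that the first summand is exactly the quantity whose normalized $L^p$-average over $B$, divided by $\phi(P(B))$, produces the term appearing in $F(f,g)$.

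The crux is to recognize that $R$ is a genuine oscillation, hence controllable. On $B$ the constant $f_B E_ng-E_n(fg)$ equals $f_Bg_B-(fg)_B$, and a direct computation gives $f_Bg_B-(fg)_B=-\frac1{P(B)}\int_B(f-f_B)g\,dP$. Thus $R=(f-f_B)g-\big[(f-f_B)g\big]_B$ is $(f-f_B)g$ minus its own average over $B$. Writing $\|\cdot\|_{p,B}$ for the normalized norm $\big(\frac1{P(B)}\int_B|\cdot|^p\,dP\big)^{1/p}$, I would bound
\begin{equation*}
 \|R\|_{p,B}\le 2\,\|(f-f_B)g\|_{p,B}\le 2\|g\|_{L_{\infty}}\,\|f-E_nf\|_{p,B}\le 2\|g\|_{L_{\infty}}\,\phi(P(B))\,\|f\|_{\cL_{p,\phi}},
\end{equation*}
using $\big|[(f-f_B)g]_B\big|\le\|(f-f_B)g\|_{p,B}$ (Jensen) for the factor $2$, $|g|\le\|g\|_{L_{\infty}}$ for the middle step, $f_B=E_nf$ on $B$, and the definition of $\|f\|_{\cL_{p,\phi}}$ for the last.

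With the reverse triangle inequality $\big|\,\|f_B(g-E_ng)+R\|_{p,B}-\|f_B(g-E_ng)\|_{p,B}\,\big|\le\|R\|_{p,B}$, dividing by $\phi(P(B))$ yields the pointwise (in $n,B$) estimate
\begin{equation*}
 \left|\frac{\|fg-E_n(fg)\|_{p,B}}{\phi(P(B))}-\frac{|f_B|\,\|g-E_ng\|_{p,B}}{\phi(P(B))}\right|\le 2\|f\|_{\cL_{p,\phi}}\|g\|_{L_{\infty}}.
\end{equation*}
Taking the supremum over all $n\ge0$ and $B\in A(\cF_n)$ of each of the two terms, and using the elementary fact that $|a_{n,B}-b_{n,B}|\le M$ for all $(n,B)$ forces $|\sup a-\sup b|\le M$, I would obtain simultaneously the equivalence $\|fg\|_{\cL_{p,\phi}}<\infty\iff F(f,g)<\infty$ and the quantitative bound \eqref{Ffg}. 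Since $f\in L_1$ and $g\in L_{\infty}$ give $fg\in L_1$, the condition $\|fg\|_{\cL_{p,\phi}}<\infty$ is the same as $fg\in\cL_{p,\phi}$, which completes the "if and only if" part.

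The main obstacle is purely the bookkeeping in the second paragraph: one must recognize that the constant correction $f_B E_ng-E_n(fg)$ is exactly minus the $B$-average of $(f-f_B)g$, which is what turns $R$ into an oscillation and produces the clean constant $2$ rather than a larger one. Everything else reduces to the triangle inequality and to passing from a uniform pointwise bound to the supremum.
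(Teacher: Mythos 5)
Your proof is correct and follows essentially the same route as the paper: the paper's proof is exactly the per-atom estimate
$\bigl|\,\|fg-E_n[fg]\|_{p,B}-|f_B|\,\|g-E_ng\|_{p,B}\bigr|\le 2\|(f-E_nf)g\|_{p,B}\le 2\phi(P(B))\|f\|_{\cL_{p,\phi}}\|g\|_{L_{\infty}}$,
which it imports by citing Lemma~3.5 of \cite{Nakai1993Studia}, followed by taking suprema. Your decomposition with the remainder $R=(f-f_B)g-[(f-f_B)g]_B$ is precisely the computation behind that cited lemma, so you have simply made the referenced step explicit.
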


\begin{proof}
Let $f\in\cL_{p,\phi}$ and $g\in L_{\infty}$. Let $B\in A(\cF_n)$. 
Since $E_nf=f_B$ on $B$, we can use the same method as in 
\cite[Lemma~3.5]{Nakai1993Studia} and we have
\begin{align}\label{nakai1993}
  &
 \left|
 \left(\frac1{P(B)}\int_B|fg-E_n[fg]|^p\, dP\right)^{1/p}-
 |f_B|\left(\frac1{P(B)}\int_B|g-E_ng|^p\, dP\right)^{1/p}
 \right|\\
  &\le
 2\left(\frac1{P(B)}\int_B|(f-E_nf)g|^p\, dP\right)^{1/p}
  \le
 2\phi (P(B))\|f\|_{\cL_{p,\phi}}\|g\|_{L_{\infty}}.\notag
\end{align}
Therefore, $fg\in\cL_{p,\phi}$ if and only if $F(f,g)<\infty$. 
In this case, we can deduce \eqref{Ffg} from \eqref{nakai1993}. 
\end{proof}

\begin{lem}\label{lem:NY3.4}
Let $\{\cF_n\}_{n\ge0}$ be regular, $\cF_0=\{\emptyset,\Omega\}$, 
$p\in[1,\infty)$ and $\phi:(0,1]\to(0,\infty)$.
Assume that $r\phi(r)^p$ is almost increasing
and that $\phi$ satisfies the doubling condition. 
If $g\in\pwm(\cLN_{p,\phi})$, 
then $g\in L_{\infty}$ and $\|g\|_{L_{\infty}}\le C\|g\|_{Op}$
for some positive constant $C$ independent of $g$.
\end{lem}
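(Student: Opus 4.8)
The plan is to bypass any test-function construction and instead exploit the single most elementary member of $\cLN_{p,\phi}$, namely the constant function $1$, together with the fact that $\|\cdot\|_{\cLN_{p,\phi}}$ dominates $|E[\cdot]|$. The idea is that a pointwise multiplier, being a bounded operator, may be iterated; iterating it on $1$ produces powers $|g|^{2k}$ whose expectations are controlled by powers of $\|g\|_{Op}$, and letting the exponent tend to infinity recovers the $L_{\infty}$ norm.

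First I would record the operator-theoretic facts. Since $\cLN_{p,\phi}$ is a Banach space with property \eqref{sub}, the discussion in Section~\ref{sec:intro} shows that the multiplication operator $M_g\colon f\mapsto fg$ is bounded on $\cLN_{p,\phi}$ with $\|M_g\|=\|g\|_{Op}<\infty$. Next I would verify that conjugation $f\mapsto\bar f$ is an isometry of $\cLN_{p,\phi}$: because $E_n\bar f=\overline{E_nf}$ one has $|\bar f-E_n\bar f|=|f-E_nf|$ and $|E\bar f|=|Ef|$. Consequently $\bar g$ is again a pointwise multiplier, and from $f\bar g=\overline{\bar f g}$ one gets $\|M_{\bar g}\|=\|g\|_{Op}$ as well. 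Finally, $1\in\cLN_{p,\phi}$ with $\|1\|_{\cLN_{p,\phi}}=|E1|=1$.

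The main step is the power estimate. Since $M_{\bar g}M_g f=|g|^2f$, induction gives $|g|^{2k}=(M_{\bar g}M_g)^k 1\in\cLN_{p,\phi}$, whence
\begin{equation*}
 \bigl\||g|^{2k}\bigr\|_{\cLN_{p,\phi}}\le\|M_{\bar g}\|^{k}\,\|M_g\|^{k}\,\|1\|_{\cLN_{p,\phi}}\le\|g\|_{Op}^{2k}.
\end{equation*}
As $|g|^{2k}\ge0$, the defining inequality $\|h\|_{\cLN_{p,\phi}}\ge|Eh|$ yields $E[|g|^{2k}]\le\|g\|_{Op}^{2k}$, that is, $\|g\|_{L_{2k}}\le\|g\|_{Op}$ for every $k$. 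Letting $k\to\infty$ and using that $\|g\|_{L_{2k}}\uparrow\|g\|_{L_{\infty}}$ on the probability space $(\Omega,\cF,P)$ gives simultaneously that $g\in L_{\infty}$ and $\|g\|_{L_{\infty}}\le\|g\|_{Op}$ (indeed with constant $C=1$).

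I expect the only delicate point to be conceptual rather than technical: recognizing that the boundedness of $g$ can be extracted purely by iterating the multiplier on the constant function, so that none of the finer hypotheses on $\phi$ (the doubling condition, the almost increasingness of $r\phi(r)^p$) nor the regularity of $\{\cF_n\}_{n\ge0}$ are actually needed for this conclusion. The remaining verifications—that conditional expectation commutes with conjugation, that $\|M_{\bar g}\|=\|M_g\|$, and that $L_{2k}$-norms increase to the $L_{\infty}$-norm on a probability space—are routine. An alternative, in the spirit hinted at by the hypothesis that $r\phi(r)^p$ be almost increasing, would be to test against the characteristic functions $\chi_B$ of Lemma~\ref{lem:chiB} and push the resulting averages to a point by martingale convergence; but that route controls $\esssup_B|g|$ only through the ratio $\phi_*/\phi$ and appears to require the extra condition \eqref{int phi}, so the power-iteration argument is both shorter and cheaper in hypotheses.
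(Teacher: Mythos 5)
Your proof is correct, and it is genuinely different from the one in the paper. The paper argues locally: it first shows $E[|g|]\ls\|g\|_{Op}$ and $E_ng\in L_\infty$ via regularity, then picks an atom $B$ with $|g_B|\ge\|E_ng\|_{L_\infty}/2$, tests the multiplier against $\chi_B$ (using Lemma~\ref{lem:chiB}, which is where the almost increasingness of $r\phi(r)^p$ enters, and $\|\chi_B\|_{\cLN_{p,\phi}}\le C/\phi(P(B))$), and extracts a lower bound for $\|g\chi_B\|_{\cLN_{p,\phi}}$ on the parent atom $B'$ supplied by Lemma~\ref{lem:NS3.3}, which is where regularity and the doubling condition are used. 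Your argument is operator-theoretic and global: it uses only that $\cLN_{p,\phi}$ is a Banach space with property \eqref{sub} (so that $M_g$ and, via the conjugation isometry, $M_{\bar g}$ are bounded with norm $\|g\|_{Op}$), that $1\in\cLN_{p,\phi}$ with norm $1$, and that $\|h\|_{\cLN_{p,\phi}}\ge|Eh|$; iterating then gives $\|g\|_{L_{2k}}\le\|g\|_{Op}$ and the limit $k\to\infty$ yields $\|g\|_{L_\infty}\le\|g\|_{Op}$. All the steps check out: $E_n\bar f=\overline{E_nf}$ so conjugation is indeed an isometry and $\bar g$ is a multiplier; the induction $(M_{\bar g}M_g)^k1=|g|^{2k}$ is sound; and the fact that bounded $L_{2k}$-norms force $\|g\|_{L_\infty}\le\sup_k\|g\|_{L_{2k}}$ on a probability space is valid even without knowing a priori that $g\in L_\infty$. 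Your route is shorter, discards all hypotheses on $\phi$ and the regularity of $\{\cF_n\}_{n\ge0}$, and gives the sharp constant $C=1$, whereas the paper's constant depends on $R$, $p$ and $\phi$; this is the classical power-iteration trick for multipliers of Banach function spaces, and your observation is that it survives here even though $\cLN_{p,\phi}$ is not a Banach function space, because only $|Eh|\le\|h\|_{\cLN_{p,\phi}}$ and $1\in\cLN_{p,\phi}$ are needed. The one thing the paper's local argument yields that yours does not is quantitative information tied to the atoms (the intermediate estimate \eqref{eqn:op}), but nothing later in the paper uses that, so nothing is lost.
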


\begin{proof}
Let $g\in\pwm(\cLN_{p,\phi})$. 
Since the constant function $1$ is in $\cLN_{p,\phi}$,
the pointwise multiplication $g=g\cdot1$ is in $\cLN_{p,\phi}$,
which implies $g\in L_1$.
Then
\begin{equation*}
  E[|g|]
  \le E[|g-Eg|]+|Eg|
  \le \max(1,\phi(1))\|g\|_{\cLN_{p,\phi}}
  \ls\|g\|_{Op}\|1\|_{\cLN_{p,\phi}}
  =\|g\|_{Op}.
\end{equation*}
Since $\{\cF_n\}_{n\ge0}$ is regular, we also have $E_ng\in L_{\infty}$ as follows:
$$
  E_n[|g|]\le RE_{n-1}[|g|]\le \cdots \le R^nE_0[|g|]=R^nE[|g|].
$$

Next we shall show that there exists a positive constant $C$ such that
$\|g\|_{L_{\infty}}\le C\|g\|_{Op}$.
Then we have the conclusion.
Let $B\in A(\cF_n)$ such that $|g_B|\geq \|E_ng\|_{L_{\infty}}/2$. 
By Lemma~\ref{lem:NS3.3} 
there exists $B'\in A(\cF_{n'})$ with $B\subset B'$ such that 
$(1+1/R)P(B)\le P(B')\le RP(B)$. 
Then, we have 
\begin{align*}
 \|g\chi_B\|_{\cLN_{p,\phi}}
  &\ge
 \frac{1}{\phi (P(B'))}\left(\frac1{P(B')}
 \int_{B'}|g\chi_B-E_{n'}[g\chi_B]|^p\,dP\right)^{1/p}\\
  &\ge
 \frac{1}{\phi (P(B'))}\left(\frac1{P(B')}
 \int_{B'\setminus B}|g\chi_B-E_{n'}[g\chi_B]|^p\,dP\right)^{1/p}\\
  &=
 \frac{1}{\phi (P(B'))}\left(\frac1{P(B')}
 \int_{B'\setminus B}|E_{n'}[[E_ng]\chi_B]|^p\,dP\right)^{1/p}.
\end{align*}
Since 
$|[E_ng]\chi_B|=|g_B\chi_B|\ge \|E_ng\|_{L_{\infty}}\chi_B/2$, 
we have
\begin{equation*}
 \int_{B'\setminus B}|E_{n'}[[E_ng]\chi_B]|^p\,dP
 \ge
 \left(\frac{\|E_ng\|_{L_{\infty}}}{2}\right)^p
 \left(\frac{P(B)}{P(B')}\right)^pP(B'\setminus B).
\end{equation*}
Hence, we have
\begin{equation}\label{eqn:op}
 \|g\chi_B\|_{\cLN_{p,\phi}}
 \ge
  \frac{\|E_ng\|_{L_{\infty}}}{2R(R+1)^{1/p}\phi (P(B'))}.
\end{equation}
Using \eqref{eqn:op}, Lemma~\ref{lem:chiB} 
and the doubling condition on $\phi$, we have
\begin{align*}
 \|E_ng\|_{L_{\infty}}
  &\le
 2R(R+1)^{1/p}\phi (P(B'))
 \|g\chi_B\|_{\cLN_{p,\phi}} \\
  &\ls
 \|g\|_{Op}\frac{\phi (P(B'))}{\phi(P(B))}\\
  &\ls
 \|g\|_{Op}.
\end{align*}
Therefore, 
$$
 \|g\|_{L_{\infty}}
 =\sup_{n\ge 0}\|E_ng\|_{L_{\infty}}\le C\|g\|_{Op}.
$$
This shows the conclusion.
\end{proof}

\section{Proof of Theorem~\ref{thm:PWM}}\label{sec:proof}

We first show that 
\begin{equation}\label{suff}
  \cL_{p,\phi/\phi_*}\cap L_{\infty}\subset \pwm(\cLN_{p,\phi})
 \quad \text{and}\quad 
  \|g\|_{Op}\le C(\|g\|_{\cL_{p,\phi/\phi_*}}+\|g\|_{L_{\infty}}).
\end{equation}
Let $g\in \cL_{p,\phi/\phi_*}\cap L_{\infty}$ and $f\in \cLN_{p,\phi}$. 
Let $F(f,g)$ be as in Lemma~\ref{lem:NY3.3}.
Then, by the definition of $F(f,g)$ and Lemma~\ref{lem:fB} we have
$$
 F(f,g)\le C\|f\|_{\cLN_{p,\phi}}\|g\|_{\cL_{p,\phi/\phi_*}}<\infty .
$$
Therefore, by Lemma~\ref{lem:NY3.3}, we have $fg\in\cL_{p,\phi}$ and
\begin{equation}\label{fg}
 \|fg\|_{\cL_{p,\phi}}\le C\|f\|_{\cLN_{p,\phi}}\|g\|_{\cL_{p,\phi/\phi_*}}
 +2\|f\|_{\cL_{p,\phi}}\|g\|_{L_{\infty}}.
\end{equation}
On the other hand, we have 
\begin{equation}\label{fg0}
 |E[fg]|\le \|g\|_{L_{\infty}}E[|f|]
 \le \|g\|_{L_{\infty}}\max(1,\phi(1))\|f\|_{\cLN_{p,\phi}}.
\end{equation}
Combining \eqref{fg} and \eqref{fg0}, we obtain \eqref{suff}.

We now show the converse, that is,
\begin{equation}\label{ness}
  \pwm(\cLN_{p,\phi})\subset \cL_{p,\phi/\phi_*}\cap L_{\infty} 
  \quad \text{and}\quad 
  \|g\|_{\cL_{p,\phi/\phi_*}}+\|g\|_{L_{\infty}}\le C \|g\|_{Op}.
\end{equation}
Let $g\in \pwm(\cLN_{p,\phi})$. 
By Lemma~\ref{lem:NY3.4}, 
we have $g\in L_{\infty}$ and $\|g\|_{L_{\infty}}\le C \|g\|_{Op}$. 
Let $B\in A(\cF_n)$. We take $B_j\in A(\cF_j)$ with $B_n=B$ such that
$$
 B_0\supset B_1\supset\cdots\supset B_n\supset\cdots .
$$
Let $f$ be the function described in Lemma~\ref{lem:fBn}. 
Then, combining Lemma~\ref{lem:fBn} and Lemma~\ref{lem:NY3.3}, we have 
\begin{align*}
 &\frac{C_2\phi_*(P(B))}{\phi (P(B))}
 \left(\frac1{P(B)}\int_B|g-E_ng|^p\, dP\right)^{1/p} \\
  &\le
 \frac{|f_B|}{\phi (P(B))}\left(\frac1{P(B)}\int_B|g-E_ng|^p\, dP\right)^{1/p}\\
  &\le
   F(f,g)\\
  &\le
   \|fg\|_{\cL_{p,\phi}}+2\|g\|_{L_{\infty}}\|f\|_{\cL_{p,\phi}}\\
  &\le
   \|g\|_{Op}\|f\|_{\cLN_{p,\phi}}+2C\|g\|_{Op}\|f\|_{\cL_{p,\phi}}\\
  &\ls
   \|g\|_{Op}\|f\|_{\cLN_{p,\phi}}
  \le
   C_1\|g\|_{Op}.
\end{align*}
Therefore, we have \eqref{ness}.

\section{Proof of Theorem~\ref{thm:PWM M}}\label{sec:pc} 

To prove Theorem~\ref{thm:PWM M} we use the following proposition.
It can be shown by the same way as \cite[Proposition 2.2]{Nakai-Sadasue2012JFSA}
which deals with the case $\phi(r)=r^{\lambda}$, $\lambda\in(-\infty,\infty)$.

\begin{prop}\label{prop:M}
Let $1\le p<\infty$ and $\phi:(0,1]\to(0,\infty)$.
Let $f\in L_1$ and $\sqc{f}$ be its corresponding martingale with $f_n=E_nf$ $(n\ge0)$.
\begin{enumerate}
\item
If $f\in \cL_{p,\phi}$,  
then $\sqc{f}$ is $\cL_{p,\phi}$-bounded and
\begin{equation*}
 \|f\|_{\cL_{p,\phi}}\ge\sup_{n\ge0}\|f_n\|_{\cL_{p,\phi}}.
\end{equation*}
Conversely, if $\sqc{f}$ is $\cL_{p,\phi}$-bounded,
then $f\in \cL_{p,\phi}$ and 
\begin{equation*}
 \|f\|_{\cL_{p,\phi}}\le\sup_{n\ge0}\|f_n\|_{\cL_{p,\phi}}.
\end{equation*}
\item
If $f\in \cLN_{p,\phi}$,  
then $\sqc{f}$ is $\cLN_{p,\phi}$-bounded and
\begin{equation*}
 \|f\|_{\cLN_{p,\phi}}\ge\sup_{n\ge0}\|f_n\|_{\cLN_{p,\phi}}.
\end{equation*}
Conversely, if $\sqc{f}$ is $\cLN_{p,\phi}$-bounded,
then $f\in \cLN_{p,\phi}$ and 
\begin{equation*}
 \|f\|_{\cLN_{p,\phi}}\le\sup_{n\ge0}\|f_n\|_{\cLN_{p,\phi}}.
\end{equation*}
\end{enumerate}
\end{prop}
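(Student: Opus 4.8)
The plan is to prove the single identity $\|f\|_{\cL_{p,\phi}}=\sup_{n\ge0}\|f_n\|_{\cL_{p,\phi}}$ (its two halves being exactly the two inequalities of part~(1)) and then to deduce part~(2) from it. Everything rests on the tower property $E_mE_n=E_{\min(m,n)}$ together with the fact that $E_mf=f_B$ is constant on each atom $B\in A(\cF_m)$. First I would record how the norm of the martingale $f_n=E_nf$ decomposes. Fix $B\in A(\cF_m)$. If $n\le m$ then $E_mf_n=E_mE_nf=E_nf=f_n$, so $f_n-E_mf_n=0$ and this atom contributes nothing to $\|f_n\|_{\cL_{p,\phi}}$; if $n>m$ then $E_mf_n=E_mE_nf=E_mf$, so $f_n-E_mf_n=E_nf-E_mf$. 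Since $E_mf$ is $\cF_m\subset\cF_n$-measurable, one rewrites this as $E_nf-E_mf=E_n(f-E_mf)$, which is the key step that links the term appearing in $\|f_n\|_{\cL_{p,\phi}}$ back to $f-E_mf$.

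For the direct inequality, assume $f\in\cL_{p,\phi}$ and apply the conditional Jensen inequality $|E_n(f-E_mf)|^p\le E_n(|f-E_mf|^p)$. Integrating over $B$ and using that $B\in\cF_m\subset\cF_n$ (so that $\int_B E_n(\cdot)\,dP=\int_B(\cdot)\,dP$), I get $\int_B|f_n-E_mf_n|^p\,dP\le\int_B|f-E_mf|^p\,dP$. Dividing by $\phi(P(B))^pP(B)$ and taking suprema over $B\in A(\cF_m)$ and over $m$ yields $\|f_n\|_{\cL_{p,\phi}}\le\|f\|_{\cL_{p,\phi}}$ for every $n$, hence $\sup_{n\ge0}\|f_n\|_{\cL_{p,\phi}}\le\|f\|_{\cL_{p,\phi}}$; that $\sqc{f}$ is $\cL_{p,\phi}$-bounded follows at once.

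For the converse, suppose $\sqc{f}$ is $\cL_{p,\phi}$-bounded with $M:=\sup_{n\ge0}\|f_n\|_{\cL_{p,\phi}}<\infty$. Fix $B\in A(\cF_m)$. For each $n\ge m$ the decomposition above gives $\frac1{P(B)}\int_B|f_n-E_mf|^p\,dP=\frac1{P(B)}\int_B|f_n-E_mf_n|^p\,dP\le M^p\phi(P(B))^p$. Because $f\in L_1$ and $\cF=\sigma(\bigcup_n\cF_n)$, the martingale convergence theorem gives $f_n\to f$ a.s., so $|f_n-E_mf|^p\to|f-E_mf|^p$ a.s.; Fatou's lemma then passes the bound to the limit, yielding $\frac1{P(B)}\int_B|f-E_mf|^p\,dP\le M^p\phi(P(B))^p$. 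Taking suprema over $B$ and $m$ gives $f\in\cL_{p,\phi}$ with $\|f\|_{\cL_{p,\phi}}\le M$.

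Finally, part~(2) reduces to part~(1): since $E=E_0$ and $EE_nf=Ef$ for all $n$, we have $|Ef_n|=|Ef|$, so $\|f_n\|_{\cLN_{p,\phi}}=\|f_n\|_{\cL_{p,\phi}}+|Ef|$, and the constant term is unaffected by the supremum over $n$. Adding $|Ef|$ to the two inequalities already obtained for $\cL_{p,\phi}$ gives the corresponding inequalities for $\cLN_{p,\phi}$, and $\cLN_{p,\phi}$-boundedness of $\sqc{f}$ is equivalent to $\cL_{p,\phi}$-boundedness plus the finite constant $|Ef|$. The only genuinely delicate point is the limiting step in the converse: one must justify $f_n\to f$ a.s.\ and apply Fatou to the correct integrand, which is precisely why identifying $E_nf-E_mf=E_n(f-E_mf)$ with the term $f_n-E_mf_n$ in $\|f_n\|_{\cL_{p,\phi}}$ is the crux of the argument. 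No regularity of $\{\cF_n\}_{n\ge0}$ is needed here.
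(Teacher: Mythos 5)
Your proof is correct, and it is essentially the argument the paper relies on: the paper does not write out a proof but defers to \cite[Proposition~2.2]{Nakai-Sadasue2012JFSA}, whose proof is exactly this scheme --- the tower property plus conditional Jensen's inequality for $\|f_n\|_{\cL_{p,\phi}}\le\|f\|_{\cL_{p,\phi}}$, and a.s.\ martingale convergence plus Fatou's lemma for the converse, with part~(2) following since $Ef_n=Ef$. No discrepancies to report.
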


\begin{rem}\label{rem:M}
In general, for $f\in \cL_{p,\phi}\cap L_{1,0}$ (res. $f\in\cLN_{p,\phi}$), 
its corresponding martingale $\sqc{f}$ with $f_n=E_nf$ 
does not always converge to $f$ in $\cL_{p,\phi}$ (res. $\cLN_{p,\phi}$).
See Remark~3.7 in \cite{Nakai-Sadasue2012JFSA} for the case $\phi(r)=r^{\lambda}$.
\end{rem}

\begin{proof}[Proof of Theorem~\ref{thm:PWM M}]
Let $g\in\pwm(\cLN_{p,\phi})$ and $f\in\cLN_{p,\phi}(\cF_n)$. 
Then, using Proposition~\ref{prop:M}, we have
$$
  \|E_n[g] f\|_{\cLN_{p,\phi}}=\|E_n[gf]\|_{\cLN_{p,\phi}}
 \le 
  \|gf\|_{\cLN_{p,\phi}}\le \|g\|_{Op} \| f\|_{\cLN_{p,\phi}}.
$$
Therefore, we have $E_ng\in\pwm(\cLN_{p,\phi}(\cF_n))$.

Conversely, assume that $E_ng\in\pwm(\cLN_{p,\phi}(\cF_n))$ and 
$\sup_{n\ge0}\|E_ng\|_{Op}<\infty$. 
Then, using Proposition~\ref{prop:M} and Theorem~\ref{thm:PWM}, we have
$$
  \|g\|_{\cL_{p,\phi/\phi_*}}+\|g\|_{L_{\infty}}
 \le
  \sup_{n\ge0}\|E_ng\|_{\cL_{p,\phi/\phi_*}}+\sup_{n\ge0}\|E_ng\|_{L_{\infty}}
 \ls
  \sup_{n\ge0}\|E_ng\|_{Op}<\infty .
$$
Using Theorem~\ref{thm:PWM} again, we have $g\in \pwm(\cLN_{p,\phi})$.
\end{proof}

\section*{Acknowledgement}
The authors would like to thank the referees for their careful reading 
and useful comments.
The first author was supported by Grant-in-Aid for Scientific Research (C), 
No.~24540159, Japan Society for the Promotion of
Science.
The second author was supported by Grant-in-Aid for Scientific Research (C), 
No.~24540171, Japan Society for the Promotion of
Science.


\bigskip

\begin{flushright}
\begin{minipage}{70mm}
\noindent
Eiichi Nakai \\
Department of Mathematics \\
Ibaraki University \\
Mito, Ibaraki 310-8512, Japan \\
enakai@mx.ibaraki.ac.jp  
\end{minipage}
\\[3ex]
\begin{minipage}{70mm}
\noindent
Gaku Sadasue \\
Department of Mathematics \\
Osaka Kyoiku University \\
Kashiwara, Osaka 582-8582, Japan \\
sadasue@cc.osaka-kyoiku.ac.jp  
\end{minipage}
\end{flushright}


\begin{thebibliography}{99}

\bibitem{Janson1976}
S.~Janson,
{On functions with conditions on the mean oscillation},
Ark. Math. 14 (1976), 189--196.

\bibitem{Kikuchi2010}
{M.~Kikuchi}, 
On some inequalities for Doob decompositions in Banach function spaces, 
Math. Z. 265 (2010), no. 4, 865--887.

\bibitem{Long1993}
R.~L.~Long, 
Martingale spaces and inequalities, 
Peking University Press, Beijing, 1993.  
ISBN: 7-301-02069-4 

\bibitem{Maligranda-Persson1989}
L.~Maligranda and L.~E.~Persson, 
Generalized duality of some Banach function spaces,  
Indag. Math. 51 (1989), no. 3, 323--338.

\bibitem{MiNaSa2012MathNachr}
T.~Miyamoto, E.~Nakai and G.~Sadasue, 
Martingale Orlicz-Hardy spaces, 
Math. Nachr. 285, (2012), 670--686. 

\bibitem{Nakai1993Studia}
E.~Nakai, 
Pointwise multipliers for functions of weighted bounded mean oscillation,
Studia Math. 105 (1993), no. 2, 105--119.

\bibitem{Nakai1995MACT}
E.~Nakai, 
Pointwise multipliers, 
Memoirs of The Akashi College of Technology, 37 (1995), 85--94. 

\bibitem{Nakai2008AMSin}
E.~Nakai, 
A generalization of Hardy spaces $H^p$ by using atoms, 
Acta Math. Sin. (Engl. Ser.) 24 (2008), no. 8, 1243--1268.

\bibitem{Nakai-Sadasue2012JFSA}
E.~Nakai and G.~Sadasue,
Martingale Morrey-Campanato spaces and fractional integrals,
J. Funct. Spaces Appl.
2012 (2012), Article ID 673929, 29 pages. DOI:10.1155/2012/673929 

\bibitem{Nakai-Yabuta1985JMSJ}
E.~Nakai and K.~Yabuta,
Pointwise multipliers for functions of bounded mean oscillation. 
J. Math. Soc. Japan 37 (1985), no. 2, 207--218.

\bibitem{Stegenga1976}
D.~A.~Stegenga, 
{Bounded Toeplitz operators on $H^1$ and applications of the duality between $H^1$ and the functions of bounded mean oscillation}, 
Amer. J. Math. \textbf{98} (1976), 573--589.

\bibitem{Weisz1990}
F.~Weisz, 
Martingale Hardy spaces for $0<p\leq 1$. 
Probab. Theory Related Fields 84 (1990), no. 3, 361--376. 

\bibitem{Weisz1994}
F.~Weisz, 
{Martingale Hardy spaces and their applications in Fourier analysis}, 
Lecture Notes in Mathematics, 1568,
Springer-Verlag, Berlin, 1994. 
ISBN: 3-540-57623-1 

\bibitem{Yabuta1993}
K.~Yabuta, 
Pointwise multipliers of weighted BMO spaces,
Proc. Amer. Math. Soc. 117 (1993), 737--744. 

\end{thebibliography}
\end{document}